\newtheorem{theorem}{Theorem}[section]
\newtheorem{lemma}[theorem]{Lemma}
\theoremstyle{definition}
\newtheorem{definition}[theorem]{Definition}
\newtheorem{example}[theorem]{Example}
\theoremstyle{remark}
\newtheorem{remark}[theorem]{Remark}
\numberwithin{equation}{section}
\def\K{\mathop{\mbox{\bf\Large K}}}
\def\k{\mathop{\mbox{\bf\large K}}}
\begin{document}

\title{The MC algorithm and its applications}

\author{Xiaodong Cao}
\address{Department of Mathematics and Physics,
Beijing Institute of Petro-Chemical Technology,
Beijing, 102617, P. R. China}
\curraddr{}
\email{caoxiaodong@bipt.edu.cn}
\thanks{}

\author{Shuang Chen}
\address{Department of Mathematics and Physics,
Beijing Institute of Petro-Chemical Technology,
Beijing, 102617, P. R. China}
\email{shchen@bipt.edu.cn}
\thanks{}

\subjclass[2000]{Primary 30B70; Secondary 68W25, 65Q10, 65B10, 11A55}

\date{}

\dedicatory{}

\keywords{Continued fraction, Bauer-Muir transformation, BBP formula, Rate of convergence, MC algorithm}

\begin{abstract}
By applying the MC algorithm and the Bauer-Muir transformation for continued fractions, in this paper we shall give six examples to show how to establish an infinite set of continued fraction formulas for certain Ramanujan-type series, such as Catalan's constant, the exponential function, etc.
\end{abstract}

\maketitle

\section{Introduction}
In a celebrated paper of Bailey-Borwein-Plouffe~\cite{BBP}, they proposed the following fast series
\begin{equation}
\pi=\sum_{m=0}^{\infty}
\frac{1}{16^m}\left(\frac{4}{8m+1}-\frac{2}{8m+4}-\frac{1}{8m+5}
-\frac{1}{8m+6}\right).\label{pi-BBP}
\end{equation}
Since this discovery in 1997, many BBP formulas for various mathematical constants have been discovered~(\emph{e.g.}~\cite{BBBW,BBMW,MSW,Zudilin,Lerch,CFormula} and references quoted therein) with the general form
\begin{equation}
\alpha=\sum_{m=0}^{\infty}\frac{1}{b^m}\frac{p(m)}{q(m)},\label{BBP}
\end{equation}
where $\alpha$ is a constant,  $p$ and $q$ are polynomials with integer coefficients, and $b\ge 2$ is an integer numerical base. In the sequel, we shall call the series of right-hand side of \eqref{BBP} a ``BBP-type series", or ``BBP series" for short. 

Assume that a specific Ramanujan-type series~(defined in Sec.~2 below) converges to $\alpha$, it is natural to ask how we construct a new sequence such that it converges to $\alpha$ with increasingly higher speed. Through analysis of the related results, many researchers have observed that this question has an intriguing connection with the first order linear difference equation with the form of $X_m-a(m)X_{m+1}=b(m)$, where both $a(m)$ and $b(m)$ are rational functions in $m$. For the related theory of the first order linear difference equation, the reader is referred to~\cite{Solving DF}. 
About the 1950s, Oscar Perron even showed some examples to explain how to use the Bauer-Muir transformation to prove that a special continued fraction satisfies a difference equation with constant coefficient $a(m)\equiv 1$~(see~\cite[p.~25-37]{Perron}). 
Mortici~\cite{Mor,Mor-1} introduced some elegant ideas of finding the finite continued fraction approximation solution, yielding a lot of interesting consequences. Moreover, Cao and Mortici~\cite{CM} also provided several examples with $a(m)\equiv 1$ by a continued fraction formula of Ramanujan involving  products and quotients of gamma functions~(see \cite[p.~156, Entry 35]{Berndt}). Based on Mortici~\cite{Mor-1} and Gosper's algorithm~(see~\cite{PWZ}), in~\cite{CY} we propose the so-called the MC algorithm~(see Sec.~3 below), which is a constructive iteration algorithm. Occasionally, the MC algorithm may be used to find a simple closed form or guess at continued fraction special solution~(see~\cite[p.~204, Example 1-3]{CY}). However, authors have not proved any new continued fraction formula for the case $a(m)\neq 1$. Following Perron's approach in~\cite{Perron}, in the present paper we shall give six examples to show how to prove that a special continued fraction satisfies a difference equation with $a(m)\neq 1$. Our main ingredient is the employment of a nonlinear modifying factors in its Bauer-Muir transformation for the continued fraction of Catalan's constant or the companion of Catalan's constant. Additionally, for the exponential function, we shall use the simplest constant modifying factors in its Bauer-Muir transformation.

The paper is organized as follows. In Sec.~2, we collect several notations and definitions for later use. In Sec.~3, based on the work in~\cite{CY}, we shall further develop the multiple-correction method, or ``MC algorithm" for short. In Sec. 4 we shall prepare some preliminary lemmas for the theory of continued fractions. From Sec.~5 to 10, we shall sequentially investigate the continued fraction representation for Catalan's constant, a companion of Catalan's constant, the arcsine function, a special Lerch transcendent, a companion of Landau's constants and the exponential function. In the last section, we shall analyze the related perspective of research in this direction.
\section{Notation and definitions}
To state our results clearly, let us introduce some notations and definitions. We use $\mathbb{N}_0$ to denote a set of non-negative integers. The notation $P_l(n)$ or $Q_l(n)$ means a polynomial of degree $l$ in $n$, while $\Phi(l;n)$ denotes a monic polynomial of degree $l$ in $n$. For convenience, the empty product $\prod_{j=0}^{n}d_j$ and sum $\sum_{j=0}^{n}d_j$  for $n <0$ are stipulated to be one and  zero, respectively. Let $\left\{a_n\right\}_{n=1}^\infty$ and $\left\{b_n\right\}_{n=0}^\infty$ be  two sequences of complex
numbers. The generalized continued fraction
\begin{equation*}
\tau=b_0+\frac{a_1}{b_1+\frac{a_2}{b_2+\ddots}}=b_0+
\begin{array}{ccccc}
a_1 && a_2 &       \\
\cline{1-1}\cline{3-3}\cline{5-5}
 b_1 &\!+\!& b_2 &\!+\cdots
\end{array}
=b_0+\K_{n=1}^{\infty}
\left(\frac{a_n}{b_n}\right)
\end{equation*}
is defined as the limit of the $n$th approximant
\begin{equation}
\frac{A_n}{B_n}=b_0+\K_{k=1}^{n}\left(\frac{a_k}{b_k}\right)
\end{equation}
as $n$ tends to infinity. The numerators $A_n$ and denominators $B_n$ of the approximants satisfy the three-term recurrence relations
\begin{equation}
A_{n}=b_{n}A_{n-1}+a_{n}A_{n-2},\quad B_{n}=b_{n}B_{n-1}+a_{n}B_{n-2}\label{Recurrence-relations}
\end{equation}
with initial values $A_{-1}=1,~A_0=b_0,~B_{-1}=0$ and
$B_0=1$, see Berndt~\cite[p.~105]{Berndt}. We also employ the space-saving notation 
\begin{align*}
\frac{a_0}{b_0+}\K_{m=1}^{\infty}\left(\frac{a_m}{b_m}\right):=\frac{a_0}{b_0+\K_{m=1}^{\infty}\left(\frac{a_m}{b_m}\right)}.
\end{align*}
For the theory of continued fractions, we refer to the books Cuyt \emph{et al.} \cite{CPV}, Jones and Thron~\cite{JT}, Khovanskii~\cite{Kh}, Lorentzen and Waadeland~\cite{LW}, Perron~\cite{Perron}, Wall~\cite{Wall}, etc. Let us now collect three definitions in the theory of continued fractions.
\begin{definition} We say that two continued fractions are equivalent if they have the same
sequence of classical approximants~(see~\cite[p.~72]{LW}). We write $b_0+\k\left(a_n/b_n\right)\approx d_0+\k\left(c_n/d_n\right)$ to express that
$b_0+\k\left(a_n/b_n\right)$
 and $d_0+\k\left(c_n/d_n\right)$ are equivalent.
\end{definition}
\begin{definition}
The Bauer-Muir transform of a continued fraction $b_0+\k(a_n/b_n)$ with respect to a sequence $\left\{r_n\right\}$ from $\mathbb{C}$ is the continued fraction $d_0+\k(c_n/d_n)$ whose canonical numerators $C_n$ and denominators $D_n$ are given by
\begin{align*}
&C_{-1}=1,\quad\quad\quad\quad\quad\quad D_{-1}=0,\\
&C_n=A_n+A_{n-1}r_n,\quad D_n=B_n+B_{n-1}r_n
\end{align*}
for $n\in \mathbb{N}_0$, where $\{A_n\}$ and $\{B_n\}$ are the canonical numerators and denominators of $b_0+\k(a_n/b_n)$. See~\cite[p.~76]{LW}.
\end{definition}
\begin{definition}
We shall call $d_0+\k\left(c_n/d_n\right)$ a contraction of $b_0+\k\left(a_n/b_n\right)$ if its classical approximants $\left\{g_n\right\}$ form a subsequence of the classical  approximants $\left\{f_n\right\}$ of $b_0+\k\left(a_n/b_n\right)$. In particular, we call $d_0+\k\left(c_n/d_n\right)$ a canonical contraction of $b_0+\k\left(a_n/b_n\right)$ if
\begin{align}
C_k=A_{n_k},\quad D_k=B_{n_k}\quad \mbox{for $k\in\mathbb{N}_0$,}
\end{align}
where $C_n, D_n, A_n$ and $B_n$ are canonical numerators and denominators of $d_0+\k\left(c_n/d_n\right)$ and $b_0+\k\left(a_n/b_n\right)$,
respectively. See~\cite[p.~83]{LW}.
\end{definition}
In this paper, we shall investigate a more general class of series than BBP-type series.
\begin{definition}
Let a series $\sum_{m=0}^{\infty}t_m$ be convergent. A function $t_m$ is said to be a Ramanujan-type term, or a ``Ramanujan term" for short if it can
be written in the form
\begin{align}
t_m=R(m)\frac{\prod_{i=1}^{uu}(a_im+c_i)!}
{\prod_{j=1}^{vv}(b_jm+d_i)!}
\frac{1}{q^m},\label{tm-def}
\end{align}
in which $q\neq 0$ is a specific constant, and
\begin{enumerate}
  \item $R(m)$ is a rational function in $m$
  \item the $a_i,c_i, b_j$ and $d_j$ are specific integers with
$a_i>0, b_j>0$, and
  \item the quantities $uu$ and $vv$ are finite, nonnegative, specific integers.
\end{enumerate}
\begin{remark} Clearly, a BBP series is a Ramanujan-type series from their definitions.
Here we also note that in the definition of a Ramanujan-type series, the previous condition $q\in (0,+\infty)$ for a quasi-BBP series~(introduced in ~\cite{CY})  is relaxed to $q\neq 0$.
\end{remark}
\end{definition}
Lastly, we give a definition introduced in the previous paper~\cite{CY}.
\begin{definition}
We shall call the integer $l-m$ the degree of a rational function $R(k)=\frac{P_l(k)}{Q_m(k)}$ in $k$, and write $\deg R(k)=l-m$.
\end{definition}

\section{The MC algorithm}
Let a Ramanujan series  $\sum_{m=0}^{\infty}t_m$ converge to constant $\alpha$. If we use the finite sum $\sum_{m=0}^{n-1}t_m$ to approximate or compute $\alpha$ for some comparatively large positive integer $n$, the error term $E(n)$ equals to $\sum_{m=n}^{\infty}t_m$. To evaluate it more accurately, in general, we need to ``separate" second main term $\mathrm{MT}(n)$ from $E(n)$ such that the new error term $E(n)-\mathrm{MT}(n)$ has a faster rate of convergence than $E(n)$ when $n$ tends to infinity. The idea of the MC algorithm is that we can manipulate it in some cases by looking for the proper structure of $\mathrm{MT}(n)$, where $\mathrm{MT}(n)=\frac{\prod_{i=1}^{uu}(a_in+c_i)!}
{\prod_{j=1}^{vv}(b_jn+d_i)!}
\frac{1}{q^n}\mathrm{MC}_k(n)$, and $\mathrm{MC}_k(n)$ usually is likely to be a sum of a polynomial and a finite continued fraction with the form $\frac{\lambda_0}{\Phi(\kappa_0;n)+}\k_{j=1}^{k}\left(\frac{\lambda_j}{\Phi_j(\kappa_j;n)}\right)$, here $\lambda_j\in \mathbb{R}$. In fact, the MC algorithm is a recursive algorithm, and one of its advantages is that by repeating the correction process we can always accelerate the convergence. More precisely, every non-zero coefficient plays a key role in accelerating the rate of approximation. The MC algorithm consists of the following several steps.
\begin{description}
  \item[Step 1] Simplify the ratio $\frac{t_{m+1}/R_{m+1}}{t_m /R_m}$ to bring the form $\frac{P_r(m)}{Q_s(m)}$, where $P_r, Q_s$ are polynomials.
  \item[Step 2] We begin from $k=0$, and  in turn find  the finite continued fraction approximation solution $\mathrm{MC}_k(m)$ of the following difference equation
\begin{align}
X_m-\frac{P_r(m)}{Q_s(m)}X_{m+1}=R(m),\label{First-DF}
\end{align}
until some suitable $k=k^*$ you want.
  \item[Step 3] Substitute the above $k$-th correction function $\mathrm{MC}_k(m)$ into the left-hand side of~\eqref{First-DF} to find the constant $C_k$ and positive integer $K_0$ such that
\begin{equation}
~~\mathrm{MC}_k(m)\!-\!\frac{P_r(m)}{Q_s(m)}\mathrm{MC}_k(m\!+\!1)\!-\!R(m)
\!+\!\frac{C_k}{m^{K_0}}\!=\!O\left(\frac{1}{m^{K_0+1}}\right).~~
\label{Second-DF}
\end{equation}
In what follows, we call $\mathrm{T}_k(m):=\mathrm{MC}_k(m)-\frac{P_r(m)}{Q_s(m)}\mathrm{MC}_k(m+1)-R(m)$  the \emph{test error function} of the $k$-th correction.
  \item[Step 4] Try to discovery the general term in the $k$th-correction. Fortunately, if answer is yes, then we construct the formal continued fraction solution of the difference
equation~\eqref{First-DF}, then propose a reasonable conjecture. Further, if $E_{k^*}(x)\equiv 0$ for some $k^*$, then we obtain a \emph{simple closed form} of the difference equation~\eqref{First-DF}.
  \item[Step 5] Consider the new \emph{quasi-BBP term} appearing in (Step 3)
\begin{align}
tt_m=\frac{1}{m^{K_0}}\frac{\prod_{i=1}^{uu}(a_im+c_i)!}
{\prod_{j=1}^{vv}(b_jm+d_i)!}\frac{1}{q^m},\label{ttm-def}
\end{align}
then repeat (Step 1) to (Step 3).
  \item[Step 6] Define the $k$-th correction error term $E_k(n)$ as
\begin{align}
E_k(n):=\alpha-\sum_{m=0}^{n-1}t_m
-\frac{\prod_{i=1}^{uu}(a_in+c_i)!}
{\prod_{j=1}^{vv}(b_jn+d_i)!}
\frac{1}{q^n}\mathrm{MC}_k(n).
\end{align}

Try to find the rate of convergence of the $k$-th correction error term $E_k(n)$ when $n$ tends to infinity.
  \item[Step 7] Based on (Step 6), we further prove sharp double inequalities of $E_k(n)$ for as possible as smaller $n$.
\end{description}
We would like to make several remarks regarding the MC algorithm. Firstly, it should be worth noting that (Step 2) plays an important role in the MC algorithm. The idea of the above algorithm is originated from Mortici~\cite{Mor-1} and Gosper's algorithm~(see Chapter 5 of ~ Petkovsek, Wilf and  Zeilberger~\cite{PWZ}). Secondly, if one cannot find the general term formula for $\mathrm{MC}_k(m)$, then only the finite continued fraction approximation solution can be provided. In this case, it predicts that finding a continued fraction representation started  from this series is ``hopeless", one should try other series expressions.

Thirdly, we explain how to look for all the related coefficients in $\mathrm{MC}_k(m)$. To do that, the initial-correction function $\mathrm{MC}_0(m)$ is vital. Let $\deg \mathrm{MC}_0(m)=-\kappa_0\in \mathbb{Z}$, and denote its first coefficient by $\lambda_0\neq 0$. It is not difficult to obtain $\kappa_0$ and $\lambda_0$, which satisfy the following condition:
\begin{align}
\min_{\kappa,\lambda}\deg\left(\frac{\lambda}{m^{\kappa}}
-\frac{P_r(m)}{Q_s(m)}\frac{\lambda}{(m+1)^{\kappa}}-R(m)\right).
\end{align}
If $\kappa_0>0$, then $\mathrm{MC}_0(m)$ has the form $\frac{\lambda_0}{\Phi(\kappa_0; m)}$ with $\Phi(\kappa_0 ;m)$ monic polynomial of degree $\kappa_0$. Otherwise, it contains two terms. The first term is the form of a polynomial
of degree $-\kappa_0$ with the leading coefficient $\lambda_0$, while the second term has the form  $\frac{\lambda_0'}{\Phi(\kappa_0'; m)}$ just like the previous case. Next, we in turn find other coefficients in $\mathrm{MC}_0(m)$ by solving a series of linear equations.

Once one determines the initial-correction function $\mathrm{MC}_0(m)$, other correction functions $\mathrm{MC}_k(m)$ for $k\ge 1$ will become easy. Actually, one may employ two approaches to treat them. One of them is a power series expansion, another is putting the whole thing over a common denominator such that
\begin{align}
\deg\left(\mathrm{MC}_k(m)-\frac{P_r(m)}{Q_s(m)}
\mathrm{MC}_k(m+1)-R(m)\right)
\end{align}
is a strictly decreasing function of $k$.

Now, we explain how to do (Step 6). First by multiplying the formula \eqref{Second-DF} by $\frac{\prod_{i=1}^{uu}(a_im+c_i)!}
{\prod_{j=1}^{vv}(b_jm+d_i)!}
\frac{1}{q^m}$, then by adding these formulas from $m=n$ to $\infty$, finally by checking $$\lim_{n\rightarrow\infty}\frac{\prod_{i=1}^{uu}(a_in+c_i)!}
{\prod_{j=1}^{vv}(b_jn+d_i)!}
\frac{1}{q^n}\mathrm{MC}_k(n)=0,$$
in this way it is not difficult to get the desired results for the rate of convergence of the $k$-th correction error term $E_k(n)$.

Lastly, for the MC algorithm it should be stressed, however, that we only need a special solution of the difference equation $X_m-a(m)X_{m+1}=b(m)$. In other words, it is not necessary to find the general solution of the corresponding  homogeneous  difference equation $X_m-a(m)X_{m+1}=0$.

\begin{example} For the exponential function $\exp(z)=\sum_{m=0}^{\infty}\frac{z^m}{m!}$, we take $R(m)=1, q=1/z, uu=0$ and $vv=1$. So one needs to solve the following equation
\begin{equation}
X_m-\frac{z}{m+1}X_{m+1}=1.\label{Exponential function-DF-0}
\end{equation}
As $R(m)\equiv 1$, it is not hard to imagine that we should choose the initial-correction
$\mathrm{MC}_0(m)$ in the form $d+\frac{u_0}{m+v_0}$. With the help of \emph{Mathematica} software, we have
\begin{align*}
&\mathrm{MC}_0(m)-\frac{z}{m+1}\mathrm{MC}_0(m+1)-1\\
&=\frac{(d-1) m^3+
(-2 + 2 d + u_0 - 2 v_0 + 2 d v_0 - b z)m^2+
d_1m+c_1}{(1 + m) (m + v_0) (1 +m + v_0)},
\end{align*}
where $c_1$ is a constant and $d_1=(-1 + d + 2 u_0 - 3 v_0 + 3 d v_0 + u_0 v_0 - v_0^2 + d v_0^2 - b z -
   u_0 z - 2 b v_0 z)$. Now we enforce the first three coefficients in the numerator to be zero, and find that
$\mathrm{MC}_0(m)=1+\frac{z}{m+1-z}$ and $\deg \mathrm{T}_0(m)=3$. Similarly, in this way we may solve $\mathrm{MC}_1(m)$,$\mathrm{MC}_2(m)$, and $\mathrm{MC}_3(m)$ in turn as follows
\begin{align*}
\mathrm{MC}_1(m)=&1+\frac{z}{m+1-z+}\frac{z}{m+2-z},\\
\mathrm{MC}_2(m)=&1+\frac{z}{m+1-z+}\frac{z}{m+2-z+}\frac{2z}{m+3-z},\\
\mathrm{MC}_3(m)=&1+\frac{z}{m+1-z+}\frac{z}{m+2-z+}\frac{2z}{m+3-z+}\frac{3z}{m+4-z}.
\end{align*}
Further, we may check that  $\deg \mathrm{T}_1(m)=5$,  $\deg \mathrm{T}_2(m)=7$ and  $\deg \mathrm{T}_3(m)=9$. So one may guess that
$\mathrm{MC}_k(m)=1+\frac{z}{m+1-z+}\k_{j=1}^{k}\left(\frac{jz}{m+j+1-z}\right)$ and $\deg \mathrm{T}_k(m)=2k+3$. Then, by using \emph{Mathematica} software again, we can verify that these relations hold for $1\le k\le 20$. Hence these tests predict that our conjecture should be true. By a limiting process, finally we guess that $CF(m)=1+\frac{z}{m+1-z+}\k_{k=1}^{\infty}\left(\frac{kz}{m+k+1-z}\right)$ is a solution of \eqref{Exponential function-DF-0}, which will be proved in Sec.~10 by the Bauer-Muir transformation.
\end{example}

\begin{example} Suppose that $r\ge 0$ and $\Re x>\frac 12$. The infinite series $S(r):=\sum_{m=1}^{\infty}\frac{2m}{(m^2+r^2)^2}$ is called a \emph{Mathieu series}. Let the continued fraction $CF(r;x)$ with a parameter $r$ be defined by
\begin{align}
CF(r;x)=\frac{1}{\left(x-\frac 12\right)^2+\frac 14\left(1+4r^2\right)+}\K_{k=1}^{\infty}
\left(\frac{\kappa_k}{\left(x-\frac 12\right)^2+\lambda_k}\right),
\label{CF-definition}
\end{align}
where for $k\in\mathbb{N}$
\begin{align}
\kappa_k= -\frac{k^4\left(k^2 + 4 r^2\right)}{4 (2 k - 1) (2 k + 1)},\quad\lambda_k=\frac 14 (2 k^2 + 2 k + 1 + 4 r^2).
\end{align}
With the MC algorithm, we may guess that $CF(r;m)$ is a solution of the equation $X_m-X_{m+1}=\frac{2m}{\left(m^2+r^2\right)^2}$. For the details of its proof, the reader is referred to~\cite{CTZ-1}.
\end{example}
\begin{remark} The MC algorithm does not require factorization of polynomials, so it is easy to operate with computer program. It's also worth mentioning that the idea of the MC algorithm allows us to seek the continued fraction approximation solution for a more general class of the first order linear difference equation. The difference equation $X_m-X_{m+1}=\frac{1}{m+1}$ is an example of such class equation, its initial-correction can be solved as $\mathrm{MC}_0(m)=-\ln\left(m+1/2\right)-\frac{1/24}{(m+1/2)^2+7/40}$, which indeed may be used to approximate the harmonic sequence.
\end{remark}
On the one hand, in order to determine all related coefficients, we often use an appropriate symbolic computation software, which requires a huge amount of computations. On the other hand, the exact expression at each occurrence also takes a lot of space. Moreover, in order to guess the continued fraction formula, we have to do a lot of additional work. All theorems in Sec. 5 to 10 build on experimental results in this section. Hence, we focus on the rigorous proof of main conjectures, and omit some of the related details for guessing these formulas. Interested readers
may refer to Sec. 6 and 8 in preprint~\cite{CTZ-2}.

\section{Some preliminary lemmas for the theory of continued fractions}
\begin{lemma} The canonical contraction
of $b_0+\k\left(a_n/b_n\right)$ with
\begin{align*}
C_k=A_{2k},\quad D_k=B_{2k}\quad \mbox{for}\quad k\in\mathbb{N}_0
\end{align*}
exists if and only if $b_{2k}\neq 0$ for $k\in\mathbb{N}$, and is given by
\begin{align}
&b_0+
\begin{array}{ccccccc}
a_1 b_2& & a_2a_3b_4& & a_4a_5b_2b_6 &\\
\cline{1-1}\cline{3-3}\cline{5-5}\cline{7-7}
a_2+b_1b_2&-& a_3b_4+b_2(a_4+b_3b_4)&-&a_5b_6+b_4(a_6+b_5b_6)&
-\cdots
\end{array}\nonumber\\
&
\begin{array}{ccc}
& a_{2n}a_{2n+1}b_{2n-2}b_{2n+2}& \\
\cline{2-2}
-&a_{2n+1}b_{2n+2}+b_{2n}(a_{2n+2}+b_{2n+1}b_{2n+2})&-\cdots.
\end{array}\nonumber
\end{align}
\end{lemma}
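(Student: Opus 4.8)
The plan is to treat the two sequences uniformly: writing $X_n$ for either $A_n$ or $B_n$, both obey the same three-term recurrence \eqref{Recurrence-relations}, so it suffices to manufacture a single two-term recurrence linking $X_{2k}$, $X_{2k-2}$ and $X_{2k-4}$ and then read off the partial numerators and denominators of the contracted fraction. First I would take the three consecutive instances of \eqref{Recurrence-relations} at indices $2k$, $2k-1$, $2k-2$, use the last to solve $X_{2k-3}=(X_{2k-2}-a_{2k-2}X_{2k-4})/b_{2k-2}$, and substitute back to eliminate the odd-indexed terms $X_{2k-1}$ and $X_{2k-3}$. This yields $X_{2k}=\tilde d_k X_{2k-2}+\tilde c_k X_{2k-4}$ with $\tilde d_k=b_{2k-1}b_{2k}+a_{2k}+a_{2k-1}b_{2k}/b_{2k-2}$ and $\tilde c_k=-a_{2k-1}a_{2k-2}b_{2k}/b_{2k-2}$. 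Clearing the factor $1/b_{2k-2}$ by the natural scaling turns the denominator into the displayed $a_{2k-1}b_{2k}+b_{2k-2}(a_{2k}+b_{2k-1}b_{2k})$ and, after absorbing the scaling carried over from the previous level, turns the numerator into $-a_{2k-2}a_{2k-1}b_{2k-4}b_{2k}$, the sign being supplied by the $-$ of the space-saving notation. This is precisely the general term of the asserted fraction under the reindexing $k=n+1$.

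The division by $b_{2k-2}$ is exactly where the hypothesis enters, and I would pin down the \emph{if and only if} through the continuant determinant rather than through the formal elimination. By the definition of a canonical contraction, the coefficients $c_k,d_k$ must satisfy $A_{2k}=d_kA_{2k-2}+c_kA_{2k-4}$ and $B_{2k}=d_kB_{2k-2}+c_kB_{2k-4}$ simultaneously; this $2\times 2$ linear system is (uniquely) solvable precisely when $A_{2k-2}B_{2k-4}-A_{2k-4}B_{2k-2}\neq 0$. Expanding $A_{2k-2}$ and $B_{2k-2}$ once via \eqref{Recurrence-relations} collapses the determinant to $b_{2k-2}(A_{2k-3}B_{2k-4}-A_{2k-4}B_{2k-3})$, and the classical identity $A_nB_{n-1}-A_{n-1}B_n=(-1)^{n-1}\prod_{j=1}^{n}a_j$ evaluates the bracket to $\prod_{j=1}^{2k-3}a_j$. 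Hence the determinant equals $b_{2k-2}\prod_{j=1}^{2k-3}a_j$, which (with all $a_j\neq 0$) vanishes exactly when $b_{2k-2}=0$; reindexing gives the stated criterion $b_{2k}\neq 0$ for all $k\in\mathbb{N}$ and, at the same time, the uniqueness of $c_k,d_k$.

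With the coefficients in hand, the remaining task is an induction verifying that the displayed fraction reproduces the even approximants. Writing $\hat A_k,\hat B_k$ for its canonical numerators and denominators, I would check the base cases $\hat A_0=A_0$, $\hat A_1=A_2$ (and similarly for $B$) directly from the first two partial fractions $a_1b_2/(a_2+b_1b_2)$ and $a_2a_3b_4/(\cdots)$, and then show that the two-term recurrence read off from the displayed coefficients forces $\hat A_k=(b_2b_4\cdots b_{2k-2})A_{2k}$ and $\hat B_k=(b_2b_4\cdots b_{2k-2})B_{2k}$, so that the common scalar cancels and the approximants $\hat A_k/\hat B_k$ coincide with $A_{2k}/B_{2k}$; the fraction-bearing precursor $b_0+\K_{k=1}^{\infty}(\tilde c_k/\tilde d_k)$ carries the canonical-contraction property in the strict sense, and the displayed fraction is its equivalent with the factors $b_{2k-2}$ cleared. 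I expect the genuine obstacle to be neither the elimination nor the determinant but the \emph{bookkeeping at the two boundary partial fractions}: the general numerator $-a_{2n}a_{2n+1}b_{2n-2}b_{2n+2}$ carries a factor $b_{2n-2}$ inherited from the scaling of the previous level, whereas at the very first application that inherited factor is $1$ rather than $b_0$. Consequently the head terms $a_1b_2$ and $a_2a_3b_4$ are genuine exceptions to the uniform pattern and must be recorded by hand, and I would reserve the most care for confirming that the inductive step closes cleanly only from $k=2$ onward, with the exceptional head matched separately.
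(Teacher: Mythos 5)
Your proposal takes a genuinely different route from the paper, for the simple reason that the paper does not prove this lemma at all: its ``proof'' is a one-line citation of Theorem 12 and Eq.~(2.4.3) of Lorentzen--Waadeland. What you have written is essentially a self-contained reconstruction of the textbook argument: eliminating the odd-indexed terms from the three-term recurrence \eqref{Recurrence-relations} to get $X_{2k}=\tilde d_k X_{2k-2}+\tilde c_k X_{2k-4}$, evaluating the relevant $2\times 2$ determinant via the classical identity $A_nB_{n-1}-A_{n-1}B_n=(-1)^{n-1}\prod_{j=1}^{n}a_j$, and clearing the factors $b_{2k-2}$ by an equivalence transformation (the paper's Lemma 4.2). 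Your computations check out: the values of $\tilde d_k$ and $\tilde c_k$, the determinant $b_{2k-2}\prod_{j=1}^{2k-3}a_j$, the scaled numerators $-a_{2n}a_{2n+1}b_{2n-2}b_{2n+2}$, and the identification $\hat A_k=(b_2b_4\cdots b_{2k-2})A_{2k}$ are all correct; moreover your observation that the displayed fraction is only \emph{equivalent} to the strict canonical contraction (whose coefficients carry the divisions by $b_{2k-2}$), with the head terms $a_1b_2$ and $a_2a_3b_4$ escaping the general pattern, is more careful than the lemma's own phrasing. What your proof buys is independence from the reference; what the paper's citation buys is brevity, since the result is standard.

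One step of your ``only if'' direction is, however, not right as stated. You claim the $2\times 2$ system for $(d_k,c_k)$ is solvable precisely when its determinant is nonzero; but a vanishing determinant rules out only \emph{uniqueness}, not existence. Indeed, if $b_{2k-2}=0$ \emph{and} $b_{2k}=0$, then $(A_{2k-2},B_{2k-2})$ is proportional to $(A_{2k-4},B_{2k-4})$, and the same determinant identities show that the level-$k$ system is then consistent, with a one-parameter family of solutions; so vanishing of the determinant alone does not kill the contraction. The correct way to close the argument is the one the definition of a continued fraction hands you: partial numerators must be nonzero. Take the smallest $k_0$ with $b_{2k_0}=0$; the determinants at all levels $k\le k_0$ involve only $b_2,\dots,b_{2k_0-2}$, which are nonzero by minimality (and level $1$ is triangular, hence always uniquely solvable), so $c_{k_0}$ is \emph{forced} to equal $a_1b_2$ (if $k_0=1$) or $-a_{2k_0-2}a_{2k_0-1}b_{2k_0}/b_{2k_0-2}$ (if $k_0\ge 2$); in either case $c_{k_0}=0$, which is inadmissible as a partial numerator. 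That is the actual obstruction, and without invoking it the implication ``contraction exists $\Rightarrow$ $b_{2k}\neq 0$ for all $k$'' does not follow from your determinant count.
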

\begin{proof} It follows from Theorem 12 and Eq.~(2.4.3) of L. Lorentzen, H. Waadeland~\cite[pp.~83\nobreakdash--84]{LW}. For applications, interested
readers  may refer to Berndt~\cite[p.~121, (14.2)]{Berndt} or \cite[p.~157]{Berndt}.\end{proof}

\begin{lemma}
$b_0+\k\left(a_n/b_n\right)\approx d_0+\k\left(c_n/d_n\right)$ if and only if there exists a sequence $\left\{r_n\right\}$ of complex numbers with
$r_0=1,r_n\neq 0$
for all $n\in \mathbb{N}$, such that
\begin{align}
d_0=b_0,\quad c_n=r_{n-1}r_n a_n,\quad d_n=r_n b_n\quad \mbox{for all
 $n\in \mathbb{N}$.}
\end{align}
\end{lemma}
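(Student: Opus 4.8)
The plan is to reduce \emph{equivalence} to the equality of the two sequences of classical approximants granted by the defining property that equivalent continued fractions share the same sequence of classical approximants, and then to track how the canonical numerators and denominators transform. Write $A_n/B_n$ and $C_n/D_n$ for the $n$-th approximants of $b_0+\k(a_n/b_n)$ and $d_0+\k(c_n/d_n)$, both generated by the three-term recurrence \eqref{Recurrence-relations}. Setting $P_n:=\prod_{k=1}^{n}r_k$ with $P_0=1$, the heart of the matter is the proportionality
\[
C_n=P_nA_n,\qquad D_n=P_nB_n\qquad(n\ge 0),
\]
from which $C_n/D_n=A_n/B_n$ is immediate whenever $P_n\neq 0$. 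Thus the lemma amounts to showing that this proportionality holds precisely when the $r_n$ satisfy the stated relations.

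For the ``if'' direction I would assume $d_0=b_0$, $c_n=r_{n-1}r_na_n$, $d_n=r_nb_n$ with $r_0=1$ and $r_n\neq 0$, and establish the displayed identity by induction on $n$. The base cases $n=0,1$ follow by direct substitution into the initial data $C_{-1}=1$, $D_{-1}=0$, $C_0=d_0$, $D_0=1$. For the inductive step one computes, using $P_{n-1}=r_{n-1}P_{n-2}$,
\[
C_n=d_nC_{n-1}+c_nC_{n-2}=r_nP_{n-1}\bigl(b_nA_{n-1}+a_nA_{n-2}\bigr)=P_nA_n,
\]
and identically for $D_n$. Since each $r_k\neq 0$ forces $P_n\neq 0$, the approximants coincide and the two continued fractions are equivalent.

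For the converse I would start from equality of approximants, $A_nD_n=B_nC_n$ for all $n$. Comparing the $0$-th approximants gives $d_0=b_0$, and the common value $P_n:=C_n/A_n=D_n/B_n$ is well defined. Put $r_0=1$ and $r_n:=P_n/P_{n-1}$. Substituting $C_k=P_kA_k$, $D_k=P_kB_k$ into the recurrences for $C_n,D_n$ and combining with the recurrences for $A_n,B_n$ yields the linear system
\[
A_{n-1}(d_nP_{n-1})+A_{n-2}(c_nP_{n-2})=P_nA_n,\quad
B_{n-1}(d_nP_{n-1})+B_{n-2}(c_nP_{n-2})=P_nB_n
\]
in the unknowns $d_nP_{n-1}$ and $c_nP_{n-2}$. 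Its coefficient determinant is $A_{n-1}B_{n-2}-A_{n-2}B_{n-1}=(-1)^{n}\prod_{k=1}^{n-1}a_k$, which follows from the standard determinant recursion $A_mB_{m-1}-A_{m-1}B_m=-a_m(A_{m-1}B_{m-2}-A_{m-2}B_{m-1})$ with initial value $-1$. Hence it is nonzero, and Cramer's rule gives the unique solution $d_nP_{n-1}=P_nb_n$, $c_nP_{n-2}=P_na_n$, that is $d_n=r_nb_n$ and $c_n=r_{n-1}r_na_n$. Finally $c_n\neq 0$ and $a_n\neq 0$ force $r_{n-1}r_n\neq 0$, so every $r_n\neq 0$.

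The main obstacle I anticipate lies entirely in the converse: justifying that the common ratio $P_n$ is genuinely well defined and nonzero, which requires the non-degeneracy conventions ($A_n,B_n\neq 0$ and partial numerators $a_k\neq 0$) implicit in the classical framework of \cite{LW}. The ``if'' direction is a routine induction, but in the ``only if'' direction one must either invoke these standing hypotheses explicitly or restrict to the usual class of continued fractions with nonzero partial numerators, since the decisive non-vanishing of the coefficient determinant—and hence the uniqueness that pins down $c_n$ and $d_n$—depends on $a_k\neq 0$.
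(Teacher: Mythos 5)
Your proof is correct, and it is worth noting at the outset that the paper itself contains no argument for this lemma: its ``proof'' is a one-line citation to Lorentzen--Waadeland \cite[p.~73, Theorem~9]{LW}. So the real comparison is between your reconstruction and the textbook proof behind that citation, and they essentially coincide. Your ``if'' direction (induction giving $C_n=P_nA_n$, $D_n=P_nB_n$ with $P_n=\prod_{k=1}^{n}r_k$) and your ``only if'' direction (writing equality of approximants projectively as $A_nD_n=B_nC_n$, extracting the scalar $P_n$, and using the nonvanishing of the determinant $A_{n-1}B_{n-2}-A_{n-2}B_{n-1}=(-1)^{n}\prod_{k=1}^{n-1}a_k$ to force $d_nP_{n-1}=P_nb_n$ and $c_nP_{n-2}=P_na_n$) are exactly the standard mechanism. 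One small repair to your closing paragraph: the nondegeneracy you need is not ``$A_n,B_n\neq 0$'' --- individual canonical numerators or denominators may well vanish --- but only that $(A_n,B_n)\neq(0,0)$ and $(C_n,D_n)\neq(0,0)$, and both of these already follow from the determinant formula because, in the framework of \cite{LW}, the partial numerators $a_n$ and $c_n$ are nonzero by the very definition of a continued fraction; no extra standing hypothesis has to be imposed. With that observation the scalar $P_n$ is well defined and nonzero even when $A_n=0$ (then $B_n\neq 0$, so $C_n=0$ and $P_n=D_n/B_n$), and your argument closes completely.
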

\begin{proof} As to the proof, consult~\cite[p.~73, Theorem 9]{LW}.\end{proof}

One of the outstanding theorems in the theory of continued fractions is the result described by O. Perron~\cite{Perron} as the Bauer-Muir transformation; this theorem and limiting cases of it give rise to numerous and extremely interesting
consequences. For instance, see Andrews \emph{et al.}~\cite{ABJL}, Berndt \emph{et al.}~\cite{BLW}, Jacobsen~\cite{Jac-1}, Lorentzen and Waadeland~\cite[p.~77--82, Example 11 to 13]{LW}, and Perron~\cite[p.~25-37]{Perron}, etc. We shall use the Jacobsen's formulation in ~\cite{Jac-1}~(also see~\cite[p.~76, Theorem 11]{LW}).
\begin{lemma} (The Bauer-Muir transformation) The Bauer-Muir transform of $b_0+\k\big(a_m/b_m\big)$ with respect to
$\left\{r_m\right\}_{m=0}^{\infty}$ from $\mathbb{C}$ exists if and only if
\begin{align}
\phi_m=a_m-r_{m-1}\left(b_m+r_m\right)\neq 0,\quad m\ge 1.
\end{align}
If it exists, then it is given by
\begin{align*}
&b_0+\begin{array}{ccccccc}
a_1& & a_2& & a_3 &\\
\cline{1-1}\cline{3-3}\cline{5-5}\cline{7-7}
b_1&+&b_2&+&b_3&
+\cdots
\end{array}\\
=&b_0+r_0+
\begin{array}{ccccccc}
\varphi_1& & a_1 \phi_2/\phi_1& & a_2 \phi_3/\phi_2 &\\
\cline{1-1}\cline{3-3}\cline{5-5}\cline{7-7}
b_1+r_1&\!+\!&b_2+r_2-r_0\phi_2/\phi_1&\!+\!
&b_3+r_3-r_1\phi_3/\phi_2&
\!+\cdots.\nonumber
\end{array}
\end{align*}
\end{lemma}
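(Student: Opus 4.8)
The plan is to recognise the sequences $C_n$ and $D_n$ appearing in the definition of the Bauer--Muir transform as the canonical numerators and denominators of the continued fraction on the right-hand side; that is, to exhibit partial numerators $c_n$ and partial denominators $d_n$ for which the three-term recurrences $C_n = d_n C_{n-1} + c_n C_{n-2}$ and $D_n = d_n D_{n-1} + c_n D_{n-2}$ hold, together with the canonical initial values. First I would dispose of the initial data: since $A_0 = b_0$, $A_{-1} = 1$, $B_0 = 1$, $B_{-1} = 0$, the definitions give $C_0 = b_0 + r_0$, $D_0 = 1$, while $C_{-1} = 1$ and $D_{-1} = 0$ are prescribed, so the candidate $d_0 = b_0 + r_0$ and the normalisation $D_0 = 1$ are consistent with the canonical convention.

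The core of the proof is a single determinant identity. Writing $\Delta_n := C_n D_{n-1} - C_{n-1} D_n$, I would substitute $C_n = A_n + A_{n-1} r_n$ and $D_n = B_n + B_{n-1} r_n$, expand, and insert the classical determinant formula $A_n B_{n-1} - A_{n-1} B_n = (-1)^{n-1}\prod_{k=1}^{n} a_k$ together with its second-order analogue $A_n B_{n-2} - A_{n-2} B_n = b_n\bigl(A_{n-1}B_{n-2} - A_{n-2}B_{n-1}\bigr)$. After collecting terms everything should collapse to
\[
\Delta_n = (-1)^{n-1}\,\phi_n \prod_{k=1}^{n-1} a_k \quad (n\ge 1), \qquad \Delta_0 = -1,
\]
with $\phi_n = a_n - r_{n-1}(b_n + r_n)$ surviving as the only new factor. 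This identity settles the equivalence at once: any genuine continued fraction has nonzero partial numerators, and the recurrences force $\Delta_n = -c_n\Delta_{n-1}$, so since $\Delta_0 \ne 0$ the transform exists precisely when every $\Delta_n \ne 0$; as the $a_k$ are nonzero, this is exactly the stated condition $\phi_m \ne 0$ for all $m \ge 1$.

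To read off the coefficients I would argue constructively. The relation $\Delta_n = -c_n \Delta_{n-1}$ with the displayed identity gives $c_1 = \phi_1$ and $c_n = a_{n-1}\,\phi_n/\phi_{n-1}$ for $n \ge 2$, which are the partial numerators in the statement. For the $d_n$ I would verify the recurrence directly: combining the recurrence for $A_n$ with the definition of $C_n$ yields $C_n - (b_n + r_n) C_{n-1} = \phi_n A_{n-2}$, and a second use of the $A$-recurrence yields $\phi_{n-1} A_{n-2} = a_{n-1} C_{n-2} - r_{n-2} C_{n-1}$. Eliminating $A_{n-2}$ between these two identities gives $C_n = \bigl(b_n + r_n - r_{n-2}\phi_n/\phi_{n-1}\bigr)C_{n-1} + \bigl(a_{n-1}\phi_n/\phi_{n-1}\bigr)C_{n-2}$, which is the asserted recurrence with $d_n = b_n + r_n - r_{n-2}\phi_n/\phi_{n-1}$; the word-for-word repetition with $A, C$ replaced by $B, D$ disposes of the denominators.

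I expect the main difficulty to be bookkeeping rather than conceptual. The delicate points are getting the signs and index shifts right in the second-order determinant step, and treating the first index $n = 1$ separately: the general formulas for $c_n$ and $d_n$ involve $r_{n-2}$ and $\phi_{n-1}$ and hence are meaningful only for $n \ge 2$, which is exactly why $c_1 = \phi_1$ and $d_1 = b_1 + r_1$ are special and must be matched by hand against $\Delta_1 = \phi_1$ and $\Delta_0 = -1$. Once the determinant identity is established, existence, the equivalence, and the explicit coefficients all follow together, so I would present that identity as the technical keystone of the argument.
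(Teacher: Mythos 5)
Your proposal is correct, but there is nothing in the paper to compare it against step by step: the paper does not prove Lemma 4.3 at all, it simply quotes Jacobsen's formulation of the Bauer--Muir theorem (also Lorentzen--Waadeland, p.~76, Theorem 11). So what you have produced is a self-contained reconstruction of the standard proof, and it checks out. The keystone identity is right: expanding $\Delta_n = C_nD_{n-1}-C_{n-1}D_n$ with $C_n = A_n + A_{n-1}r_n$, $D_n = B_n + B_{n-1}r_n$ and inserting $A_nB_{n-1}-A_{n-1}B_n = (-1)^{n-1}\prod_{k=1}^{n}a_k$ and $A_nB_{n-2}-A_{n-2}B_n = b_n\left(A_{n-1}B_{n-2}-A_{n-2}B_{n-1}\right)$ collapses the three surviving groups of terms to $\Delta_n = (-1)^{n-1}\phi_n\prod_{k=1}^{n-1}a_k$, with $\Delta_0=-1$ and the paper's empty-product convention covering $n=1$. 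Your two elimination identities $C_n-(b_n+r_n)C_{n-1}=\phi_nA_{n-2}$ and $\phi_{n-1}A_{n-2}=a_{n-1}C_{n-2}-r_{n-2}C_{n-1}$ are both correct (each is a one-line consequence of the three-term recurrence for $A_n$), eliminating $A_{n-2}$ gives exactly $c_n=a_{n-1}\phi_n/\phi_{n-1}$ and $d_n=b_n+r_n-r_{n-2}\phi_n/\phi_{n-1}$ for $n\ge 2$, and the $n=1$ data $c_1=\phi_1$, $d_1=b_1+r_1$ match the direct computation; replacing $(A,C)$ by $(B,D)$ verbatim handles the denominators. The two halves also fit together logically: existence forces $\Delta_n=-c_n\Delta_{n-1}\neq 0$ and hence $\phi_n\neq 0$, while conversely your constructed $c_n,d_n$ satisfy the recurrences with $c_n\neq 0$ whenever all $\phi_n\neq 0$, since the $a_k$ are nonzero by definition of a continued fraction. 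The only caveat worth recording is interpretive: the ``$=$'' displayed in the lemma, if read as equality of \emph{values} of the two continued fractions, is a convergence assertion that neither the cited theorem nor your argument addresses; what you have proved --- and what the lemma, in Jacobsen's formulation, actually asserts --- is that the continued fraction on the right-hand side \emph{is} the Bauer--Muir transform, i.e., its canonical numerators and denominators are precisely $C_n=A_n+A_{n-1}r_n$ and $D_n=B_n+B_{n-1}r_n$. That is exactly the property the paper uses in Sections 5 through 10, so your proof supplies the missing foundation in the form the applications require.
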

For the continued fraction with all the elements positive, we may employ the following lemma to estimate the error term.
\begin{lemma} Let $\left\{a_k\right\}_{k=1}^{\infty}, \left\{b_k\right\}_{k=1}^{\infty}$ be two sequences of positive numbers. Let the error term $E_m$ be defined by $E_m:=\k_{k=1}^{\infty}\left(a_k/b_k\right)-\k_{k=1}^{m}\left(a_k/b_k\right)$. For all positive integer $m$, then
\begin{align}
\left|E_m\right|\le \frac{
\prod_{k=1}^{m+1}a_j}{B_{m+1}B_m},\label{Error-bounds}
\end{align}
where $A_n$ and $B_n$ denote the numerators and denominators of the $n$th approximant of $\k_{k=1}^{\infty}\left(a_k/b_k\right)$.
\end{lemma}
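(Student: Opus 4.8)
The plan is to reduce the statement to the elementary theory of alternating series by way of the determinant identity for the canonical numerators and denominators. First I would record the fundamental relation
\[
A_n B_{n-1}-A_{n-1}B_n=(-1)^{n-1}\prod_{k=1}^{n}a_k,
\]
which follows by a one-line induction from the three-term recurrences \eqref{Recurrence-relations} together with the initial values $A_{-1}=1,\ A_0=b_0,\ B_{-1}=0,\ B_0=1$. Dividing by $B_nB_{n-1}$ expresses the gap between two consecutive approximants as
\[
\frac{A_n}{B_n}-\frac{A_{n-1}}{B_{n-1}}=\frac{(-1)^{n-1}\prod_{k=1}^{n}a_k}{B_nB_{n-1}},
\]
so that the $m$th approximant is the partial sum of an alternating series whose $n$th term has absolute value $g_n:=\prod_{k=1}^{n}a_k/(B_nB_{n-1})$.

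Next I would invoke positivity. Since all $a_k,b_k>0$, the recurrence $B_n=b_nB_{n-1}+a_nB_{n-2}$ with $B_0=1,\ B_{-1}=0$ forces $B_n>0$ and strictly increasing for $n\ge 1$; in particular each $g_n>0$. The successive gaps alternate in sign, and their magnitudes strictly decrease, because
\[
\frac{g_{n+1}}{g_n}=\frac{a_{n+1}B_{n-1}}{B_{n+1}}=\frac{a_{n+1}B_{n-1}}{b_{n+1}B_n+a_{n+1}B_{n-1}}<1,
\]
the inequality holding since $b_{n+1}B_n>0$. Thus $\{A_n/B_n\}$ is an alternating sequence with strictly decreasing step sizes: the even-indexed approximants increase, the odd-indexed approximants decrease, and the two families interlace around the value $V=\K_{k=1}^{\infty}(a_k/b_k)$.

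Finally, this interlacing traps $V$ between $A_m/B_m$ and $A_{m+1}/B_{m+1}$ for every $m$, whence
\[
\abs{E_m}=\abs{V-\frac{A_m}{B_m}}\le\abs{\frac{A_{m+1}}{B_{m+1}}-\frac{A_m}{B_m}}=\frac{\prod_{k=1}^{m+1}a_k}{B_{m+1}B_m},
\]
which is precisely \eqref{Error-bounds}. I do not expect a genuine obstacle here: the argument is entirely standard, and the only points needing care are the sign bookkeeping in the determinant identity and the fact that the single omitted term $g_{m+1}$ dominates the whole tail. Both are dispatched by the monotone-interlacing property rather than by estimating the infinite remainder directly, and this same property even makes a separate convergence hypothesis unnecessary, since the nested intervals $[A_{2k}/B_{2k},A_{2k+1}/B_{2k+1}]$ already pin down the value against which $E_m$ is measured.
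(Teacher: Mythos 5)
Your proof is correct and takes essentially the same route as the paper's: positivity gives the monotone interlacing of the even- and odd-indexed approximants, this traps the value of $\k_{k=1}^{\infty}\left(a_k/b_k\right)$ between $A_m/B_m$ and $A_{m+1}/B_{m+1}$, and the determinant formula identifies that single gap with $\prod_{k=1}^{m+1}a_k/(B_{m+1}B_m)$. The only cosmetic differences are that the paper quotes the determinant formula from \cite[p.~9, (1.2.10)]{LW} where you rederive it by induction, and that you justify the interlacing explicitly through the ratio $g_{n+1}/g_n<1$.

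One caution: two of your side remarks are false in general, although neither is used in your chain of inequalities. First, $B_n$ need not be strictly increasing; with $b_1$ large and $a_2,b_2$ small one has $B_2=b_2b_1+a_2<b_1=B_1$, and all you need (and all that holds) is $B_n>0$. Second, strictly decreasing gap magnitudes do not force $g_n\to 0$, so your closing claim that the nested intervals make a convergence hypothesis unnecessary is wrong: for positive elements with, say, $a_k\equiv 1$ and $\sum_k b_k<\infty$, the even and odd approximants converge to two \emph{different} limits (Stern--Stolz), and then $\k_{k=1}^{\infty}\left(a_k/b_k\right)$ does not exist. The existence of that value is implicitly presupposed by the lemma through the definition of $E_m$, exactly as in the paper, and should not be advertised as a by-product of the argument.
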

\begin{proof} As all the elements of the continued fraction $\k_{k=1}^{\infty}\left(a_k/b_k\right)$ are positive, we find easily  that the sequence $\left\{A_{2l-1}/B_{2l-1}\right\}_{l=1}^{\infty}$ is strictly decreasing, and $\left\{A_{2l}/B_{2l}\right\}_{l=1}^{\infty}$ is strictly increasing. Therefore,
\begin{equation}
\K_{k=1}^{2l}\left(\frac{a_k}{b_k}\right)<
\K_{k=1}^{\infty}\left(\frac{a_k}{b_k}\right)
<\K_{k=1}^{2l-1}\left(\frac{a_k}{b_k}\right).
\end{equation}
By means of the determinant formula of continued fractions~(for instance, see~\cite[p.~9, (1.2.10)]{LW}), we derive that
\begin{align}
\frac{A_{m+1}}{B_{m+1}}-\frac{A_m}{B_m}=\frac{(-1)^{m}
\prod_{k=1}^{m+1}a_k}{B_{m+1}B_m}.\label{adjacent-term}
\end{align}
Hence
\begin{align}
\left|E_m(x)\right|
<\left|\K_{k=1}^{m+1}\left(\frac{a_k}{b_k}\right)-\K_{k=1}^{m}
\left(\frac{a_k}{b_k}\right)\right|
=\left|\frac{A_{m+1}}{B_{m+1}}-\frac{A_m}{B_m}\right|=\frac{
\prod_{j=1}^{m+1}a_j}{B_{m+1}B_m}.
\end{align}
This finishes the proof of Lemma 4.4.\end{proof}
\begin{remark}
In the interest of saving space, we shall not give applications of Lemma 4.4, since the details is very similar to that in~\cite[p.~1044-1045]{CTZ-1}.
\end{remark}

\section{Catalan's constant}
Catalan's constant can be defined as
\begin{equation}
K=\sum_{m=0}^{\infty}\frac{(-1)^m}{(2m+1)^2}=0.9159655942\ldots,\label{Catalan's definition}
\end{equation}
which is one of those classical constants whose irrationality and transcendence remain unproven. 
Catalan's constant is also expressible as~(see \cite{MSW,CFormula})
\begin{equation}
K=\frac 12\sum_{m=0}^{\infty}\frac{4^m (m!)^2}{(2m)!(2m+1)^2}.\label{Catalan's definition-1}
\end{equation}
For further details of the history of Catalan's constants up to about 2013, the reader is referred to Bailey et al.~\cite{BBMW}.
\begin{theorem} Let $\Re x\ge 0$, and we write
\begin{equation}
CF(x):=\frac{1/2}{x+q_0+}\K_{k=1}^{\infty}\left(\frac{p_k}{x+q_k}\right),
\end{equation}
where
\begin{align}
p_k=\frac{(2k)^3(2k-1)^3}{4(4k-3)(4k-1)^2(4k+1)},\quad q_k=\frac{4k^2+2k-1}{2(4k-1)(4k+3)}.
\end{align}
Then for all non-negative integer $n$, we have
\begin{equation}
K=\frac 12\sum_{m=0}^{n-1}\frac{4^m (m!)^2}{(2m)!(2m+1)^2}+\frac 12\frac{4^n (n!)^2}{(2n)!}\cdot CF(n).
\end{equation}
In particular, $K=\frac 12CF(0)=\frac 12+CF(1)$.
\end{theorem}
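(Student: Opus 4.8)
The plan is to recast the claimed summation identity as a first--order difference equation for $CF$ and then to verify that equation by a Bauer--Muir transformation. Write $t_m=\frac{4^m(m!)^2}{(2m)!(2m+1)^2}$ and $g_m=\frac{4^m(m!)^2}{(2m)!}$, so that $t_m=g_m/(2m+1)^2$ and $K=\tfrac12\sum_{m\ge0}t_m$. Put $R_n=K-\tfrac12\sum_{m=0}^{n-1}t_m=\tfrac12\sum_{m\ge n}t_m$; the theorem asserts $R_n=\tfrac12 g_n\,CF(n)$. A direct computation gives $g_{n+1}/g_n=\frac{2n+2}{2n+1}$, so if I set $W_n=\tfrac12 g_n\,CF(n)$, then $W_n-W_{n+1}=\tfrac12 t_n$ holds exactly when
\begin{equation}
CF(x)-\frac{2x+2}{2x+1}\,CF(x+1)=\frac{1}{(2x+1)^2}. \label{plan-DE}
\end{equation}
Since $R_n-R_{n+1}=\tfrac12 t_n$ as well, $R_n-W_n$ is independent of $n$; hence \eqref{plan-DE} together with $\lim_{n\to\infty}W_n=0$ (and $\lim_{n\to\infty}R_n=0$, automatic for a convergent tail) forces $R_n=W_n$ for every $n$, which is precisely the theorem. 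The two special cases $K=\tfrac12 CF(0)$ and $K=\tfrac12+CF(1)$ then drop out from $g_0=1$, $g_1=2$, $t_0=1$.

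First I would settle the boundary condition $\lim_{n\to\infty}W_n=0$, which only involves real integer arguments. For $k\ge1$ one checks that $p_k>0$ and $q_k>0$, so for $x=n\ge0$ every element of $CF(n)$ is positive and the continued--fraction tail is positive; thus $0<CF(n)=O(1/n)$, with convergence and sharper bounds available from Lemma 4.4. Combined with the Stirling estimate $g_n=4^n/\binom{2n}{n}\sim\sqrt{\pi n}$, this yields $W_n=O(1/\sqrt n)\to0$. For complex $x$ with $\Re x\ge0$ the positivity of the $q_k$ and the boundedness of the $p_k$ give convergence of $CF(x)$ by a standard criterion, which is all that is needed to read \eqref{plan-DE} as an identity between genuinely convergent continued fractions.

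The heart of the proof is \eqref{plan-DE}, and here I would follow Perron's strategy through the Bauer--Muir transformation of Lemma 4.3. Regard $CF(x)$ as the continued fraction $\K_{j=1}^{\infty}(a_j/b_j)$ with $b_0=0$, $a_1=\tfrac12$, $b_1=x+q_0$, and $a_{j}=p_{j-1}$, $b_j=x+q_{j-1}$ for $j\ge2$. The goal is to produce, for a suitable modifying sequence $\{r_m(x)\}$, a Bauer--Muir transform whose peeled--off leading data are the new $b_0+r_0=\frac{1}{(2x+1)^2}$ and a first partial numerator $\frac{x+1}{2x+1}$, and whose tail, after an equivalence transformation (Lemma 4.2) with a rescaling sequence $\{s_m\}$, reproduces exactly the shifted continued fraction $CF(x+1)$ (same numerators $p_k$, denominators shifted by $x\mapsto x+1$). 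Matching the transform term by term converts \eqref{plan-DE} into the demand that the ratios $\phi_m/\phi_{m-1}$ reproduce $p_{m-1}/p_{m-2}$ while the pair $(r_m,s_m)$ absorbs the unit shift in each $b_m$; imposing the first--quotient conditions already shows that $r_1$, and hence every $r_m$, must be a rational --- that is, nonlinear --- function of $m$ and $x$, as advertised in the introduction, rather than a constant.

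The main obstacle is precisely the determination and verification of this nonlinear modifying factor. I expect to guess the closed form of $r_m(x)$ from the low--order data generated by the MC algorithm of Section 3, to check the existence condition $\phi_m=a_m-r_{m-1}(b_m+r_m)\ne0$ of Lemma 4.3, and then to reduce the statement that the transformed tail coincides with $CF(x+1)$ to a family of polynomial identities in $m$ (and $x$) for the partial numerators and denominators. These identities are elementary but heavy; I would establish them either by a uniform algebraic manipulation or, absent a clean pattern, by the canonical--contraction bookkeeping of Lemma 4.1 together with symbolic verification. Once \eqref{plan-DE} is secured, the telescoping of the first paragraph closes the argument.
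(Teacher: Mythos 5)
Your outer scaffolding is sound and matches the paper's: you reduce the theorem to the first-order difference equation $CF(x)-\frac{2(x+1)}{2x+1}CF(x+1)=\frac{1}{(2x+1)^2}$, telescope, and close with the boundary condition $\lim_{n\to\infty}\frac12 g_n CF(n)=0$ (your positivity-plus-Stirling argument for that limit is fine, and is essentially what the paper invokes when it notes $\lim_{m\to\infty}\frac{4^m(m!)^2}{(2m)!}X_m=0$). The problem is that the heart of the matter --- the proof of the difference equation itself --- is never carried out. Everything after ``The main obstacle is precisely the determination and verification of this nonlinear modifying factor'' is a plan, not a proof: you say you ``expect to guess'' the closed form of $r_m(x)$, that the existence condition $\phi_m\ne 0$ remains ``to check,'' and that the resulting identities would be established ``by a uniform algebraic manipulation or \ldots symbolic verification.'' No modifying sequence is exhibited, so the claimed equation, and with it the theorem, is unproven.

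There is also a structural reason to doubt the specific route you sketch. You propose to apply Bauer--Muir \emph{directly} to $CF(x)$, i.e.\ to the fraction with elements $p_k,q_k$. The paper (following Perron) does something crucially different: it first \emph{expands} the problem, introducing
\begin{equation*}
F(x)=b_0+\K_{k=1}^{\infty}\left(\frac{a_k}{b_k}\right),\qquad a_{2k-1}=\tfrac{(2k-1)^3}{2},\quad a_{2k}=-\tfrac{(2k)^3}{2},\quad b_{2k-1}=4k-1,\quad b_{2k}=(4k+1)x,
\end{equation*}
and applies Bauer--Muir to $F$ with the modifying factors $r_{2k-1}=-2k+2x+1$ and $r_{2k}=2k^2-(2x-1)k+2x^2+x+\tfrac12$, for which one gets the constant value $\phi_k\equiv-\tfrac12(2x+1)^3$; after an equivalence transformation (Lemma 4.2) the transform collapses to $F(x)-\tfrac12(2x+1)^2=\frac{-\frac12(x+1)(2x+1)^3}{(x+1)(2x+1)+F(x+1)}$, which yields the difference equation for $X_m=\frac{1/2}{F(m)}$. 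Only at the very end is $CF(m)$ identified with $X_m$, via Lemma 4.1: $CF$ is the \emph{even canonical contraction} of the expanded fraction. It is this expansion that makes the Bauer--Muir computation tractable (simple alternating elements, polynomial $r$'s, constant $\phi$'s); for the contracted fraction, whose $p_k,q_k$ are complicated rational functions of $k$, you offer no evidence that any modifying sequence produces the required shift structure, and your inference that $r_m$ ``must be'' a rational function of $m$ is speculation, not a construction. To repair the argument you should insert the expansion step (Lemma 4.1) before invoking Bauer--Muir, and then actually exhibit and verify the modifying factors.
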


\begin{lemma}For $k\ge 1$, let
$\kappa_{2k}=\frac{-(2k)^3}{2(4k-1)(4k+1)}, \kappa_{2k-1}=\frac{(2k-1)^3}{2(4k-3)(4k-1)}$,
$\lambda_{2k}=m$, and $\lambda_{2k-1}=1$.
Then the sequence $\{X_m\}_{m=0}^{\infty}=\left\{\frac{1/2}{m+}\k_{k=1}^{\infty}\left(\frac{\kappa_k}{\lambda_k}\right)\right\}_{m=0}^{\infty}$ satisfies the following  difference equation
\begin{equation}
X_{m}-\frac{2(m+1)}{2m+1}X_{m+1}=\frac{1}{(2m+1)^2}.\label{Catalan-DF}
\end{equation}
\end{lemma}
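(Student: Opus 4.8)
The plan is to recast the asserted difference equation as the continued-fraction identity
\[
X_m=\frac{1}{(2m+1)^2}+\frac{2(m+1)}{2m+1}\,X_{m+1},
\]
and to establish it by transforming the continued fraction for $X_m$ into that for the right-hand side by means of the Bauer-Muir transformation (Lemma 4.3), followed by an equivalence transformation (Lemma 4.2). First I would write $X_m$ explicitly as $\K_{n=1}^{\infty}\left(\tilde a_n/\tilde b_n\right)$ with $\tilde a_1=\tfrac12$, $\tilde a_n=\kappa_{n-1}$ for $n\ge 2$, and $\tilde b_n=m$ for odd $n$, $\tilde b_n=1$ for even $n$ (so the outer term $b_0=0$). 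After pulling the scalar $\tfrac{2(m+1)}{2m+1}$ into the leading partial numerator of $X_{m+1}$ and reading off its tail, the target is a continued fraction of exactly the same alternating shape but with $m$ replaced by $m+1$ in the odd-indexed partial denominators; absorbing the additive term $\tfrac{1}{(2m+1)^2}$ amounts to prescribing the $b_0$-term of the transform.

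Next I would apply the Bauer-Muir transformation to $X_m$ with a sequence $\{r_n\}$ determined by matching. The leading data force the natural choices $r_0=\tfrac{1}{(2m+1)^2}$ (which produces the additive constant) and $r_1=1$ (which turns the first partial denominator $m$ into $m+1$); for instance $\phi_1=\tilde a_1-r_0(\tilde b_1+r_1)=\tfrac{4m^2+2m-1}{2(2m+1)^2}$. The remaining factors $r_n$ for $n\ge 2$ are the nonlinear modifying factors emphasised in the introduction: they must be chosen, with separate closed forms for even and odd $n$ mirroring the alternating denominators, so that the transformed partial quotients $a'_n=\tilde a_{n-1}\phi_n/\phi_{n-1}$ and $b'_n=\tilde b_n+r_n-r_{n-2}\phi_n/\phi_{n-1}$, after a subsequent equivalence transformation with factors $\{\rho_n\}$ (with $\rho_0=1$), reproduce term by term the continued fraction of $\tfrac{2(m+1)}{2m+1}X_{m+1}$. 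Concretely this yields, at each level $n$, one numerator equation $\rho_{n-1}\rho_n a'_n=\kappa_{n-1}$ and one denominator equation $\rho_n b'_n\in\{m+1,1\}$, a balanced coupled system from which $r_n$ and $\rho_n$ are solved recursively.

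Two verifications then remain. I must check the existence condition $\phi_n=\tilde a_n-r_{n-1}(\tilde b_n+r_n)\neq 0$ for every $n$, so that the Bauer-Muir transform is defined, and I must justify convergence so that the formal identity of continued fractions is genuinely an identity of their values. The latter is most easily obtained by passing to the canonical contraction of Lemma 4.1, which collapses the alternating $m,1,m,1,\dots$ pattern into a continued fraction with positive elements (these are, up to the contraction formula, the $p_k$ of Theorem 5.1), whence convergence and the error control of Lemma 4.4 apply.

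I expect the main obstacle to be the discovery and verification of the closed forms for $r_n$ and $\rho_n$. Because the partial denominators of $X_m$ alternate between $m$ and $1$, these factors split into two families indexed by the parity of $n$, and proving that the matching conditions close up for all $n$ reduces to a pair of polynomial identities in $m$ and $n$. Confirming these identities, and that the resulting $\phi_n$ never vanish for $m\ge 0$, is the computational heart of the argument and is precisely the step for which the symbolic computation of Section~3 was used; once it is in place, the chain \emph{Bauer-Muir transform, then equivalence transform} delivers $X_m=\tfrac{1}{(2m+1)^2}+\tfrac{2(m+1)}{2m+1}X_{m+1}$, which is the claimed equation.
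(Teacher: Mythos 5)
Your overall strategy (a Bauer--Muir transformation followed by an equivalence transformation, leading to $X_m=\frac{1}{(2m+1)^2}+\frac{2(m+1)}{2m+1}X_{m+1}$) is the same one the paper uses, but your proposal stops short of the actual mathematical content, and the one piece of data you do commit to is inconsistent. Concretely: with $r_0=\frac{1}{(2m+1)^2}$ and $r_1=1$, your denominator equation $\rho_1(\tilde b_1+r_1)=m+1$ forces $\rho_1=1$, and then the first partial numerator of your transformed fraction is $\phi_1=\frac{4m^2+2m-1}{2(2m+1)^2}$, whereas the first partial numerator of $\frac{2(m+1)}{2m+1}X_{m+1}$ is $\frac{2(m+1)}{2m+1}\cdot\frac12=\frac{m+1}{2m+1}$. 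These are different rational functions of $m$ (already the first approximants disagree), so the term-by-term matching you describe fails at level one. If one actually solves your two level-one equations, the solution is $r_1=\frac{1}{4(m+1)}$ and $\rho_1=\frac{4(m+1)^2}{(2m+1)^2}$, not $r_1=1$, $\rho_1=1$; so your ``natural choices'' contradict your own matching scheme.

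The deeper gap is that everything beyond level one is deferred: you never exhibit the modifying factors, never show that the recursive $(r_n,\rho_n)$ system closes up into two parity families with closed forms, and never verify $\phi_n\neq 0$. That closure is not a routine verification --- generically such a recursion produces ever more complicated rational functions, and it collapses here only for a very special choice of factors; discovering them is the proof. The paper avoids your reciprocal bookkeeping entirely: it sets $F(x)=x+\K_{k=1}^{\infty}\left(\frac{a_k}{b_k}\right)$ with $a_{2k}=-\frac{(2k)^3}{2}$, $a_{2k-1}=\frac{(2k-1)^3}{2}$, $b_{2k}=(4k+1)x$, $b_{2k-1}=4k-1$ (equivalent, via Lemma 4.2, to the fraction in the lemma, so that $X_m=\frac{1/2}{F(m)}$), and applies Bauer--Muir to $F$ with the explicit factors $r_{2k}=2k^2-(2x-1)k+2x^2+x+\frac 12$, $r_{2k-1}=-2k+2x+1$. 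Then $\phi_k\equiv-\frac 12(2x+1)^3$ is constant, the transformed denominators come out as $(4k-1)(x+1)$ and $4k+1$, and one equivalence transformation identifies the tail with $F(x+1)$, giving $F(x)-\frac 12(2x+1)^2=\frac{-\frac 12(x+1)(2x+1)^3}{(x+1)(2x+1)+F(x+1)}$; taking reciprocals yields \eqref{Catalan-DF}. Exhibiting such factors (or proving their existence and the non-vanishing of the $\phi_n$) is precisely the step your proposal leaves open, so as it stands it is a plausible plan rather than a proof.
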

\begin{remark}
Lemma 5.2 confirms a question proposed in~\cite[p.~204]{CY}.
\end{remark}
\begin{proof} First we let
\begin{equation}
F(x)=b_0+\K_{k=1}^{\infty}\left(\frac{a_k}{b_k}\right),
\end{equation}
where
\begin{align*}
&a_{2k}=-\frac{(2k)^3}{2},\quad a_{2k-1}=\frac{(2k-1)^3}{2},\\
&b_{2k}=(4k+1)x,\quad b_{2k-1}=4k-1.
\end{align*}

Following Perron's approach in \cite[p.~29-32]{Perron}(also see Berndt~\cite[p.~159-161]{Berndt}), we now choose
\begin{equation}
r_{2k}=2k^2-(2x-1)k+2x^2+x+\frac 12,\quad r_{2k-1}=-2k+2x+1,
\end{equation}
and write
\begin{equation*}
\phi_{2k}=a_{2k}-r_{2k-1}\left(b_{2k}+r_{2k}\right),\quad
\phi_{2k-1}=a_{2k-1}-r_{2k-2}\left(b_{2k-1}+r_{2k-1}\right).
\end{equation*}
We easily show that $\phi_{2k}=\phi_{2k-1}=-\frac 12(2x+1)^3$. Moreover, it is not difficult to check that
$b_0+r_0=\frac 12(2x+1)^2$, $b_1+r_1=2(x+1)$, $b_{2k}+r_{2k}-r_{2k-2}=(4k-1)(x+1)$ and $ b_{2k+1}+r_{2k+1}-r_{2k-1}=4k+1$.
Then applying Lemma 4.3, we find that
\begin{equation*}
F(x)=b_0+r_0+\begin{array}{ccccccccc}
\phi_1& & a_1& & a_2 &&a_3&\\
\cline{1-1}\cline{3-3}\cline{5-5}\cline{7-7}\cline{9-9}
b_1+ r_1&\!+\!&b_2+r_2-r_0&\!+\!&b_3+r_3-r_1&+&b_4+r_4-r_2&\!+\!\cdots\!.
\end{array}
\end{equation*}
By means of Lemma 4.2, we get
\begin{align*}
&F(x)-\frac 12(2x+1)^2\\
=&\begin{array}{ccccccccccc}
-\frac 12(2x+1)^3& & a_1& & a_2 &&a_{2k-1}&&a_{2k}&\\
\cline{1-1}\cline{3-3}\cline{5-5}\cline{7-7}\cline{9-9}\cline{11-11}
2(x+1)&\!+\!&3(x+1)&\!+\!&5&\!+\!\cdots\!+\!&\!(4k-1)(x+1)\!&\!+\!&\!4k+1\!&\!+\!\cdots\!
\end{array}\\
=&\begin{array}{ccccccccccc}
-\frac 12(x+1)(2x+1)^3& & a_1& & a_2 &&a_{2k-1}&&a_{2k}&\\
\cline{1-1}\cline{3-3}\cline{5-5}\cline{7-7}\cline{9-9}\cline{11-11}
2(x+1)^2&\!+\!&3&\!+\!&\!5(x+1)\!&\!+\!\cdots\!+\!&\!4k-1\!&\!+\!&\!(4k+1)(x+1)\!&\!+\!\cdots\!
\end{array}\\
=&\frac{-\frac 12(x+1)(2x+1)^3}{(x+1)(2x+1)+F(x+1)}.
\end{align*}
From the above identity, one derives easily that
\begin{equation*}
F(x)\left(F(x+1)+(x+1)(2x+1)\right)=F(x+1)\frac 12 (2x+1)^2.
\end{equation*}
Dividing by $F(x)F(x+1)$ on both sides of the equation, we get
\begin{equation*}
1+\frac{(x+1)(2x+1)}{F(x+1)}=\frac{1}{F(x)}\frac 12 (2x+1)^2,~
\emph{i.e.}
~\frac{1/2}{F(x)}-\frac{2(x+1)}{2x+1}\frac{1/2}{F(x+1)}=\frac{1}{(2x+1)^2 }.
\end{equation*}
Finally, taking $X_m=\frac{1/2}{F(m)}$ and using Lemma 4.2 again, this will complete the proof of Lemma 5.2 at once.\end{proof}

\noindent{\emph{Proof of Theorem 5.1.} Firstly, multiplying  equation \eqref{Catalan-DF} by $\frac{4^m (m!)^2}{(2m)!}$, we find that}
\begin{equation}
\frac{4^m (m!)^2}{(2m)!}X_m-\frac{4^{m+1} \left((m+1)!\right)^2}{\left(2(m+1)\right)!}X_{m+1}=\frac{4^m (m!)^2}{(2m)!(2m+1)^2}.
\end{equation}
Then, by adding these formulas from $m=n$ to $m=\infty$, and noting $\lim_{m\rightarrow\infty}\frac{4^m (m!)^2}{(2m)!}X_m=0$, one deduces that
\begin{equation}
\frac{4^n (n!)^2}{(2n)!}\cdot X_n=\sum_{m=n}^{\infty}\frac{4^m (m!)^2}{(2m)!(2m+1)^2}.
\end{equation}
Finally, by Lemma 4.1, we see that $CF(m)$ is the even part of $X_m$, and this completes the proof of Theorem 5.1.\qed


\section{A companion of Catalan's constant}
The companion of Catalan's constant is defined as
\begin{equation}
G=\sum_{m=0}^{\infty}\frac{4^m (m!)^2}{(2m)!(4m+1)^2}.\label{Companion G-def}
\end{equation}
\begin{theorem}
For $\Re x\ge 0$ we define
\begin{equation}
CF(x):=\frac{1/8}{x+}\K_{k=1}^{\infty}\left(\frac{p_k}{x+q_k}\right),
\end{equation}
where
\begin{align*}
p_k=\begin{cases}
\frac{(2k-1)^4}{16(4k-3)(4k+1)}& \mbox{if $k$ is odd},\\
\frac{(2k)^4}{16(4k-3)(4k+1)}  &\mbox{if $k$ is even},
\end{cases}\quad
q_k=\begin{cases}
-\frac 12& \mbox{if $k$ is odd},\\
 0&\mbox{if $k$ is even}.
\end{cases}
\end{align*}
For all $n\in \mathbb{N}_0$, then
\begin{equation}
G=\sum_{m=0}^{n-1}\frac{4^m (m!)^2}{(2m)!(4m+1)^2}+\frac{4^n (n!)^2}{(2n)!}\cdot CF(n).
\end{equation}
\end{theorem}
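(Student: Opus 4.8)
The plan is to mirror the proof of Theorem 5.1 almost verbatim, the only change being that the right-hand side $\frac{1}{(2m+1)^2}$ of the Catalan difference equation \eqref{Catalan-DF} is replaced by $\frac{1}{(4m+1)^2}$. The crucial point is that the factorial weight $w_m:=\frac{4^m(m!)^2}{(2m)!}$ is the \emph{same} in both problems and obeys $w_{m+1}/w_m=\frac{2(m+1)}{2m+1}$. Writing $t_m=\frac{4^m(m!)^2}{(2m)!(4m+1)^2}$, the assertion of Theorem 6.1 is equivalent to
\[
\frac{4^n(n!)^2}{(2n)!}\,CF(n)=\sum_{m=n}^{\infty}t_m,
\]
which, after taking the difference in $n$ and dividing by $w_n$, reduces to the single difference equation
\[
CF(m)-\frac{2(m+1)}{2m+1}CF(m+1)=\frac{1}{(4m+1)^2}.
\]
Thus everything comes down to showing that the continued fraction whose even part is $CF$ solves this equation.

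First I would isolate, as a lemma parallel to Lemma 5.2, the claim that $X_m:=\frac{1/8}{m+}\K_{k=1}^{\infty}(\kappa_k/\lambda_k)$ satisfies $X_m-\frac{2(m+1)}{2m+1}X_{m+1}=\frac{1}{(4m+1)^2}$, where $\kappa_k,\lambda_k$ are chosen --- by reverse-engineering through the even-contraction formula of Lemma 4.1 --- so that the even part of $X_m$ is exactly the $CF(x)$ of the theorem. Granting this lemma, the passage to Theorem 6.1 is identical to the argument after \eqref{Catalan-DF}: multiply the difference equation by $w_m$ to telescope, sum from $m=n$ to $\infty$, use $\lim_{m\to\infty}w_mX_m=0$ (valid since $t_m=O(m^{-3/2})$ is summable), and finally invoke Lemma 4.1 to identify $CF(m)$ with the even part of $X_m$.

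To prove the lemma I would follow Perron's Bauer-Muir scheme exactly as in Lemma 5.2. I would write $F(x)=b_0+\K_{k=1}^{\infty}(a_k/b_k)$ with the appropriate two-periodic pattern of $a_k,b_k$ underlying $CF$, set $X_m=\frac{1/8}{F(m)}$, and search for a modifying sequence $\{r_m\}$ --- here a genuinely nonlinear one, as foreshadowed in the Introduction --- so that the quantities $\phi_m=a_m-r_{m-1}(b_m+r_m)$ collapse to a single closed form independent of the parity of $m$, while the transformed partial denominators $b_k+r_k-r_{k-2}$ become proportional to those of $F(x+1)$. Applying Lemma 4.3 and then the equivalence transformation of Lemma 4.2, I expect a self-referential identity relating $F(x)$ and $F(x+1)$ of the same shape as in Lemma 5.2, namely $F(x)\big(F(x+1)+g(x)\big)=h(x)\,F(x+1)$ for appropriate rational functions $g,h$; dividing by $F(x)F(x+1)$ and substituting $X_m=\frac{1/8}{F(m)}$ then yields the desired difference equation.

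The main obstacle will be guessing the correct nonlinear modifying factors $\{r_m\}$ and verifying the cancellation $\phi_{2k}=\phi_{2k-1}$ into a clean closed form. In the Catalan case $r_{2k}$ was quadratic and $r_{2k-1}$ linear in $k$; the denominator $(4m+1)^2$ and the alternating $q_k$-pattern here (note that $CF$ now carries no $q_0$-shift, so the even-part bookkeeping differs) will force a subtler ansatz, and both its discovery --- with symbolic-computation assistance, as the paper stresses --- and the ensuing verification of the resulting polynomial identities constitute the heart of the proof.
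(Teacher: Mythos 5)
Your overall strategy is exactly the paper's: isolate a Lemma-5.2-style claim that a suitable $X_m$ solves $X_m-\frac{2(m+1)}{2m+1}X_{m+1}=\frac{1}{(4m+1)^2}$, prove it by a Bauer--Muir transformation with nonlinear modifying factors, then multiply by $w_m=\frac{4^m(m!)^2}{(2m)!}$ and telescope using $w_mX_m\to 0$. But as a proof it has a genuine gap, which you yourself flag: you never exhibit the underlying continued fraction, the modifying factors, or the $\phi$-collapse, and these are the entire mathematical content. The paper's Lemma 6.2 works with $F(x)=b_0+\k_{k=1}^{\infty}\left(\frac{a_k}{b_k}\right)$ where
\begin{align*}
&a_{2k}=\frac{(4k)^4}{16},\quad a_{2k-1}=\frac{(4k-3)^4}{16},\quad b_0=x,\quad b_{2k}=(8k+1)x,\quad b_{2k-1}=(8k-3)\left(x-\tfrac 12\right),\\
&r_{2k}=4k^2-(4x-1)k+2x^2+\tfrac 18,\qquad r_{2k-1}=4k^2-(4x+1)k+2x^2+x+\tfrac 18,
\end{align*}
verifies $\phi_{2k}=\phi_{2k-1}=-\frac{(4x+1)^4}{64}$, $b_0+r_0=\frac 18(4x+1)^2$, $b_{2k}+r_{2k}-r_{2k-2}=(8k-3)(x+1)$ and $b_{2k+1}+r_{2k+1}-r_{2k-1}=(8k+1)\left(x+\tfrac 12\right)$, and then, after Lemma 4.3 and two applications of the equivalence transformation (Lemma 4.2), arrives at
\begin{equation*}
F(x)\left(F(x+1)+\frac{(x+1)(4x+1)^2}{4(2x+1)}\right)=\tfrac 18(4x+1)^2\,F(x+1),
\end{equation*}
whence $X_m=\frac{1/8}{F(m)}$ solves the difference equation. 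Announcing that one will ``search for'' such $r_k$ so that $\phi_m$ collapses is a plan, not a proof; without producing and checking this data (quadratic in $k$, with parity-dependent coefficients) nothing has been established.

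There is also a structural misstep in your bookkeeping: you plan to ``invoke Lemma 4.1 to identify $CF(m)$ with the even part of $X_m$,'' importing the contraction step from Theorem 5.1. In this theorem no contraction occurs. Re-indexing the lemma's elements shows that its continued fraction \emph{is} the theorem's $CF$, element by element: for odd $j=2k-1$ one has $\kappa_j=\frac{(2j-1)^4}{16(4j-3)(4j+1)}$ and $\lambda_j=m-\frac 12$, and for even $j=2k$ one has $\kappa_j=\frac{(2j)^4}{16(4j-3)(4j+1)}$ and $\lambda_j=m$, which are precisely the $p_j$ and $m+q_j$ of the statement. The parity pattern of $q_k$ is carried by the factor $\left(x-\frac 12\right)$ in $b_{2k-1}$ of the underlying $F$, and the only post-processing needed is the equivalence transformation of Lemma 4.2, not the even contraction of Lemma 4.1. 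Hunting for a longer continued fraction whose even part equals $CF$ would be an unnecessary detour, and the Bauer--Muir computation you propose would not naturally produce such an object here.
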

\begin{lemma} We let $\kappa_{2k}=\frac{(4k)^4}{16(8k-3)(8k+1)}$, $\kappa_{2k-1}=\frac{(4k-3)^4}{16(8k-7)(8k-3)}$, $\lambda_{2k}=m$ and $\lambda_{2k-1}=m-\frac 12$. Then sequence $\{X_m\}_{m=0}^{\infty}=\left\{\frac{1/8}{m+}\k_{k=1}^{\infty}\left(\frac{\kappa_k}{\lambda_k}\right)\right\}_{m=0}^{\infty}$ is a special solution of the following equation
\begin{equation}
X_{m}-\frac{2(m+1)}{2m+1}X_{m+1}=\frac{1}{(4m+1)^2}.\label{Companion-G-DF}
\end{equation}
\end{lemma}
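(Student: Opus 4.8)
The plan is to prove Lemma 6.3 along exactly the same Bauer--Muir route that established Lemma 5.2, the one genuinely new feature being that the recurrence multiplier $\frac{2(m+1)}{2m+1}$ and the inhomogeneity $\frac{1}{(4m+1)^2}$ now carry \emph{two different} linear factors, $2x+1$ and $4x+1$. First I would replace the continued fraction defining $\{X_m\}$ by an auxiliary one in standard form. Writing $X_m=\frac{1/8}{F(m)}$, so that $F(x)=x+\k_{k=1}^{\infty}(\kappa_k/\lambda_k)$, I would apply the equivalence transformation of Lemma 4.2 with the \emph{constant} factors $r_{2k-1}=\frac{1}{8k-3}$, $r_{2k}=\frac{1}{8k+1}$ and $r_0=1$. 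Using $\kappa_n=r_{n-1}r_n a_n$ and $\lambda_n=r_n b_n$, a direct check then shows that $F$ is equivalent to
\begin{equation*}
F(x)=x+\K_{k=1}^{\infty}\left(\frac{a_k}{b_k}\right),
\end{equation*}
with
\begin{equation*}
a_{2k}=\frac{(4k)^4}{16},\quad a_{2k-1}=\frac{(4k-3)^4}{16},\quad b_{2k}=(8k+1)x,\quad b_{2k-1}=(8k-3)\left(x-\tfrac12\right),
\end{equation*}
a form with polynomial partial quotients on which the Bauer--Muir transformation runs cleanly.

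Next I would select a sequence of nonlinear modifying factors $\{r_n\}$, quadratic in $k$ and $x$ as in Perron's treatment of the Catalan case, and form
\begin{equation*}
\phi_{2k}=a_{2k}-r_{2k-1}(b_{2k}+r_{2k}),\qquad \phi_{2k-1}=a_{2k-1}-r_{2k-2}(b_{2k-1}+r_{2k-1}).
\end{equation*}
The guiding constraint, read off from the target functional equation, is that the transformed leading term $b_0+r_0$ equal $\frac{(4x+1)^2}{8}$, which forces $r_0=2x^2+\frac18$; the remaining factors are then pinned down by demanding both that $\phi_{2k}$ and $\phi_{2k-1}$ collapse to one common expression and that the transformed partial denominators $b_n+r_n-r_{n-2}$ factor nicely. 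Granting this, I would invoke Lemma 4.3 to write down the Bauer--Muir transform and then apply one or more equivalence transformations (Lemma 4.2) to redistribute the factors $x+1$ and $2x+1$ --- exactly the role played by the two displayed array manipulations in the proof of Lemma 5.2 --- until the tail reassembles as a copy of $F(x+1)$. The outcome should be the self-referential identity
\begin{equation*}
F(x)-\frac{(4x+1)^2}{8}=\frac{-\dfrac{(x+1)(4x+1)^4}{32(2x+1)}}{\dfrac{(x+1)(4x+1)^2}{4(2x+1)}+F(x+1)}.
\end{equation*}

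From here the argument is purely algebraic. Clearing denominators yields $F(x)\big(F(x+1)+\frac{(x+1)(4x+1)^2}{4(2x+1)}\big)=\frac{(4x+1)^2}{8}F(x+1)$; dividing by $F(x)F(x+1)$ and substituting $X_m=\frac{1/8}{F(m)}$ converts this into $X_m-\frac{2(m+1)}{2m+1}X_{m+1}=\frac{1}{(4m+1)^2}$, which is \eqref{Companion-G-DF}. The same equivalence transformation used at the outset identifies $\frac{1/8}{F(m)}$ with the continued fraction written in the statement, so $\{X_m\}$ is indeed the claimed special solution.

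The hard part will be the second step: discovering the correct nonlinear modifying factors $\{r_n\}$. Because the multiplier carries $2x+1$ while the inhomogeneity carries $4x+1$, these factors cannot be a mere rescaling of Perron's Catalan choice, and the insistence that $\phi_{2k}$ and $\phi_{2k-1}$ reduce to the \emph{same} closed form imposes a delicately overdetermined set of conditions. In practice I would locate them with the MC algorithm of Section 3 and then confirm them by symbolic computation; once they are in hand, verifying $\phi_{2k}=\phi_{2k-1}$ together with the telescoping of the denominators is lengthy but entirely routine.
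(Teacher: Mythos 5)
Your strategy is exactly the one the paper uses: rewrite the continued fraction in polynomial-element form via Lemma 4.2 (your constant factors $\tfrac{1}{8k-3}$, $\tfrac{1}{8k+1}$ are correct), apply the Bauer--Muir transformation (Lemma 4.3) with quadratic modifying factors, redistribute factors by further equivalence transformations until the tail is $F(x+1)$, and finish with the algebra you describe; indeed the functional equation you display, the value $r_0=2x^2+\tfrac18$, and the final division by $F(x)F(x+1)$ all coincide with the paper's proof. But there is a genuine gap: you never exhibit the Bauer--Muir modifying factors $r_n$ for $n\ge 1$, and these are the entire non-routine content of the lemma. Your argument rests on the claims that (i) $\phi_{2k}$ and $\phi_{2k-1}$ collapse to one common expression and (ii) the shifted denominators $b_n+r_n-r_{n-2}$ factor so as to reproduce $F(x+1)$, and it defers the existence of factors satisfying (i) and (ii) to an unspecified search. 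As you yourself observe, these conditions are overdetermined, so nothing in your write-up guarantees they can be met; "the outcome should be" the displayed identity is at this stage a conjecture, not a proof.

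The paper closes precisely this gap by writing down
\begin{equation*}
r_{2k}=4k^2-(4x-1)k+2x^2+\tfrac18,\qquad r_{2k-1}=4k^2-(4x+1)k+2x^2+x+\tfrac18,
\end{equation*}
and verifying directly that, with $\phi_{2k}=a_{2k}-r_{2k-1}(b_{2k}+r_{2k})$ and $\phi_{2k-1}=a_{2k-1}-r_{2k-2}(b_{2k-1}+r_{2k-1})$, one has the $k$-independent value $\phi_{2k}=\phi_{2k-1}=-\tfrac{(4x+1)^4}{64}$, together with
\begin{gather*}
b_0+r_0=\tfrac18(4x+1)^2,\qquad b_1+r_1=\tfrac18\left(16x^2+16x+5\right),\\
b_{2k}+r_{2k}-r_{2k-2}=(8k-3)(x+1),\qquad b_{2k+1}+r_{2k+1}-r_{2k-1}=(8k+1)\left(x+\tfrac12\right).
\end{gather*}
Once these identities are checked (a routine computation), the two equivalence transformations you gesture at do turn the Bauer--Muir transform into
$F(x)-\tfrac18(4x+1)^2=\dfrac{-\frac{(x+1)(4x+1)^4}{32(2x+1)}}{\frac{(x+1)(4x+1)^2}{4(2x+1)}+F(x+1)}$,
and the rest of your argument is correct and matches the paper. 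So: right route, correct endgame, but the proof is incomplete until these $r_n$ (or an equally explicit choice) are produced and conditions (i) and (ii) are actually verified.
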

\begin{proof} First we write
\begin{equation}
F(x)=b_0+\K_{k=1}^{\infty}\left(\frac{a_k}{b_k}\right),
\end{equation}
where
\begin{align*}
&a_{2k}=\frac{(4k)^4}{16},\quad a_{2k-1}=\frac{(4k-3)^4}{16},\\
&b_{2k}=(8k+1)x,\quad b_{2k-1}=(8k-3)(x-1/2).
\end{align*}
We now take
\begin{equation}
r_{2k}=4k^2-(4x-1)k+2x^2+\frac 18,\quad r_{2k-1}=4k^2-(4x+1)k+2x^2+x+\frac 18,
\end{equation}
which are polynomials of degree two in $k$.  Then  define
\begin{align}
\phi_{2k}=a_{2k}-r_{2k-1}\left(b_{2k}+r_{2k}\right),~~\phi_{2k-1}=a_{2k-1}-r_{2k-2}\left(b_{2k-1}+r_{2k-1}\right),
\end{align}
hence $\phi_{2k}=\phi_{2k-1}=-\frac {(4x+1)^4}{64}$. Moreover, it is not difficult to verify that
\begin{align*}
&b_0+r_0=\frac 18(4x+1)^2,\quad b_1+r_1=\frac 18 ( 16 x^2+ 16 x +5),\\
&b_{2k}+r_{2k}-r_{2k-2}=( 8 k-3) (x+1),\quad b_{2k+1}+r_{2k+1}-r_{2k-1}=(8 k +1) (x +1/2).
\end{align*}
By means of Lemma 4.3, we find that
\begin{align*}
F(x)=&b_0+r_0+\begin{array}{ccccccccc}
\phi_1& & a_1& & a_2 &&a_3&\\
\cline{1-1}\cline{3-3}\cline{5-5}\cline{7-7}\cline{9-9}
b_1+ r_1&+&b_2+r_2-r_0&+&b_3+r_3-r_1&+&b_4+r_4-r_2&+\cdots.
\end{array}
\end{align*}
Applying Lemma 4.2 twice, it follows from the foregoing equality that
\begin{align*}
&F(x)-\frac 18(4x+1)^2\\
=&\begin{array}{ccccccccccc}
-\frac {(4x+1)^4}{64}& & a_1& & a_2 &&a_{2k-1}&&a_{2k}&\\
\cline{1-1}\cline{3-3}\cline{5-5}\cline{7-7}\cline{9-9}\cline{9-9}
\!\frac 18\!(\!16x^2\!+\!16x\!+\!5\!)\!&\!+\!&\!5\!(\!x\!+\!1\!)\!&\!+\!&\!9\!(\!x\!+\!\frac 12\!)\!&\!+\!\cdots\!+\!&\!(\!8k\!-\!3\!)\!(\!x\!+\!1\!)\!&\!+\!&(8k\!+\!1)\!(\!x\!+\!\frac 12\!)\!&\!+\!\cdots\!
\end{array}\\
=&
\begin{array}{ccccccccccc}
-\frac {(x+1)(4x+1)^4}{64}& & a_1& & a_2 &&a_{2k-1}&&a_{2k}&\\
\cline{1-1}\cline{3-3}\cline{5-5}\cline{7-7}\cline{9-9}\cline{9-9}
\!\frac 18\!(\!16x^2\!+\!16x\!+\!5\!)\!(\!x\!+\!1\!)\!&\!+\!&\!5\!&\!+\!&\!9\!(\!x\!+\!\frac 12\!)\!&\!+\!\cdots\!+\!&\!8k\!-\!3\!\!&\!+\!&(8k\!+\!1)\!(\!x\!+\!\frac 12\!)\!&\!+\!\cdots\!
\end{array}\\
=&\begin{array}{ccccccccccc}
-\frac {(4x+1)^4}{64(x+\frac 12)}& & a_1& & a_2 &&a_{2k-1}&&a_{2k}&\\
\cline{1-1}\cline{3-3}\cline{5-5}\cline{7-7}\cline{9-9}\cline{9-9}
\!\frac 18\!(\!16x^2\!+\!16x\!+\!5\!)\!\frac{\!x\!+\!1\!}{\!x\!+\!\frac 12\!}\!&\!+\!&\!5\!(\!x\!+\!\frac 12\!)\!&\!+\!&\!9\!(\!x\!+\!1\!)\!&\!+\!\cdots\!+\!&\!(\!8k\!-\!3\!)\!(\!x\!+\!\frac 12\!)\!&\!+\!&(8k\!+\!1)\!(\!x\!+\!1\!)\!&\!+\!\cdots\!
\end{array}\\
=&\frac{-\frac {(x+1)(4x+1)^4}{32(2x+1)}}{\frac{(x+1)(4x+1)^2}{4(2x+1)}+F(x+1)}.
\end{align*}
Therefore,
\begin{equation*}
F(x)\left(F(x+1)+\frac{(x+1)(4x+1)^2}{4(2x+1)}\right)=F(x+1)\frac 18(4x+1)^2.
\end{equation*}
Dividing by $F(x)F(x+1)$ on both sides of the equation, we obtain that
\begin{equation}
\frac{1/8}{F(x)}-\frac{2(x+1)}{(2x+1)}\frac{1/8}{F(x+1)}=\frac{1}{(4x+1)^2}.
\end{equation}
Finally taking $X_m=\frac{1/8}{F(m)}$ and employing Lemma 4.2 once more, we get the desired result.\end{proof}

\noindent{\emph{Proof of Theorem 6.1.} By multiplying both sides of \eqref{Companion-G-DF} by $\frac{4^m (m!)^2}{(2m)!}$ , we find that}
\begin{equation}
\frac{4^m (m!)^2}{(2m)!}X_m-\frac{4^{m+1} \left((m+1)!\right)^2}{\left(2(m+1)\right)!}X_{m+1}=\frac{4^m (m!)^2}{(2m)!(4m+1)^2}.
\end{equation}
Then let us sum this equality from $m=n$ to $\infty$, which will readily complete the proof of Theorem 6.1.\qed

\section{Antitrigonometric function arcsine}
For $0<w<4$, we define~(see Lorentzen and Waadeland~\cite[P.~297]{LW} and Wall~\cite[p.~345]{Wall})
\begin{equation}
g(w):=\sum_{m=0}^{\infty}\frac{w^m (m!)^2}{(2m)!(2m+1)}=\frac{4\arcsin\frac{\sqrt {w}}{2}}{\sqrt{w(4-w)}}.
\end{equation}
Note that $\left(g(1),g(2),g(3)\right)=\left(\frac{2 \pi}{3 \sqrt 3}, \frac{\pi}{2}, \frac{4\pi}{3\sqrt 3}\right).$

\begin{theorem}
Let $0<w<4$, $\Re x\ge 0$ and
\begin{equation}
CF(w;x):=\frac{2/(4-w)}{x+ q_0+}\K_{k=1}^{\infty}\left(\frac{p_k}{x+q_k}\right),
\end{equation}
where
\begin{align}
p_k=-\frac{2 w k(2k-1)}{(4-w)^2},\quad q_k=\frac{2+(4+w)k}{4-w}.
\end{align}
For $n\in \mathbb{N}_0$, one has
\begin{equation}
g(w)=\sum_{m=0}^{n-1}\frac{w^m (m!)^2}{(2m)!}+\frac{w^n (n!)^2}{(2n)!}\cdot CF(w;n).
\end{equation}
\end{theorem}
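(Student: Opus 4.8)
The plan is to follow the two-step template already used for Theorems 5.1 and 6.1: first isolate a continued fraction $X_m$ whose even contraction is $CF(w;m)$ and show it solves a first-order difference equation, then telescope against the factor $S(m)=\frac{w^m(m!)^2}{(2m)!}$. Since $S(m+1)/S(m)=\frac{w(m+1)}{2(2m+1)}$, the difference equation I would aim to establish is
\[
X_m-\frac{w(m+1)}{2(2m+1)}X_{m+1}=1,
\]
where the right-hand side is pinned by the requirement that the telescoped head reproduce exactly the summand $\frac{w^m(m!)^2}{(2m)!}$ displayed in the statement.

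First I would state and prove an auxiliary lemma, the analogue of Lemmas 5.2 and 6.2. Introduce a continued fraction $F(w;x)=b_0+\K_{k=1}^{\infty}\left(a_k/b_k\right)$ with even/odd elements $a_{2k},a_{2k-1},b_{2k},b_{2k-1}$ (polynomial in $k$ and $x$, with coefficients depending on $w$) chosen so that the canonical even contraction of Lemma 4.1 returns the continued fraction inside $CF(w;x)$ with the stated $p_k,q_k$; set $X_m:=\frac{2/(4-w)}{F(w;m)}$, so that $X_m=CF(w;m)$ by Lemma 4.1. Following Perron's method, I would then choose modifying factors $\{r_k\}$ that are quadratic in $k$ (split into even and odd cases), verify that $\phi_{2k}=\phi_{2k-1}$ collapses to a single $k$-independent constant, and evaluate the four shifted quantities $b_0+r_0$, $b_1+r_1$, $b_{2k}+r_{2k}-r_{2k-2}$, and $b_{2k+1}+r_{2k+1}-r_{2k-1}$. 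Applying the Bauer-Muir transformation (Lemma 4.3) and then the equivalence transformation (Lemma 4.2) should re-fold the transformed tail back into $F(w;x+1)$, producing a functional identity of the shape $F(w;x)\bigl(F(w;x+1)+P(x)\bigr)=F(w;x+1)\,Q(x)$; dividing by $F(w;x)F(w;x+1)$ then yields the displayed difference equation.

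With the difference equation in hand, the remaining steps are routine. Multiplying by $S(m)$ and using $\frac{w(m+1)}{2(2m+1)}S(m)=S(m+1)$ gives the telescoping relation $S(m)X_m-S(m+1)X_{m+1}=S(m)$. Summing from $m=n$ to $\infty$ and using $S(m)X_m\to 0$ (valid for $0<w<4$, since $S(m)\sim\sqrt{\pi m}\,(w/4)^m\to 0$ while $X_m=O(1/m)$) yields $\frac{w^n(n!)^2}{(2n)!}X_n=\sum_{m=n}^{\infty}\frac{w^m(m!)^2}{(2m)!}$, and Lemma 4.1 gives $CF(w;m)=X_m$. This is the stated identity once the left-hand constant $X_0=CF(w;0)=\sum_{m=0}^{\infty}\frac{w^m(m!)^2}{(2m)!}$ is put in closed form using the classical arcsine continued fraction cited at the head of the section; matching that evaluation against the value claimed on the left is the final consistency check.

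The main obstacle is the construction of the modifying factors $\{r_k\}$ in the auxiliary lemma: there is no automatic recipe, and the whole argument depends on the quadratic-in-$k$ ansatz simultaneously making $\phi_k$ constant and making the four shifted quantities realign into $F(w;x+1)$. I expect the delicate point to be the bookkeeping of the even/odd split under a general base $w$ rather than the special value $4$ of the previous sections. In practice I would first pin down the coefficients of $r_{2k}$ and $r_{2k-1}$ by matching the two lowest-order terms, then confirm the constancy of $\phi_k$ as a single polynomial identity in $k$ (with $w$ and $x$ as parameters), exactly as the MC-algorithm verification is carried out symbolically in the examples of Section 3.
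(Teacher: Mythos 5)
There is a genuine gap, and it sits at the exact point where you ``pinned'' the right-hand side of the difference equation from the displayed summand. The partial sum in the theorem statement contains a typo: $g(w)$ is defined at the head of the section as $\sum_{m=0}^{\infty}\frac{w^m(m!)^2}{(2m)!(2m+1)}$, so the correct finite sum is $\sum_{m=0}^{n-1}\frac{w^m(m!)^2}{(2m)!(2m+1)}$, and the difference equation actually satisfied by the stated continued fraction (the paper's Lemma 7.2) is
\begin{equation*}
X_m-\frac{w(m+1)}{2(2m+1)}X_{m+1}=\frac{1}{2m+1},
\end{equation*}
not $X_m-\frac{w(m+1)}{2(2m+1)}X_{m+1}=1$. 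Your lemma with right-hand side $1$ is false for the given $p_k,q_k$: carrying out the Bauer--Muir step with the \emph{linear} modifying factors $r_k=-\frac{w}{4-w}\,k+\frac{wx}{4-w}$ gives the constant $\phi_k=-\frac{2w(x+1)(2x+1)}{(4-w)^2}$ and the functional identity $F(x)\left(F(x+1)+\frac{w(x+1)}{4-w}\right)=\frac{2(2x+1)}{4-w}F(x+1)$, from which the factor $\frac{1}{2x+1}$ on the right is unavoidable. Your own final consistency check would have exposed this: at $n=0$ the identity forces $CF(w;0)=g(w)=\frac{4\arcsin(\sqrt{w}/2)}{\sqrt{w(4-w)}}$ (numerically, $CF(2;0)\to\pi/2$), whereas a right-hand side of $1$ would telescope to $CF(w;0)=\sum_{m=0}^{\infty}\frac{w^m(m!)^2}{(2m)!}$, a different function (equal to $2+\pi/2$ at $w=2$). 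So the proof as planned cannot close; the fix is to prove the $\frac{1}{2m+1}$ equation and correct the summand, after which telescoping against $S(m)=\frac{w^m(m!)^2}{(2m)!}$ works exactly as you describe.

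A secondary, structural point: you imported the wrong template. The even-contraction step (Lemma 4.1) and the parity-split quadratic ansatz for $r_k$ belong to Sections 5 and 6; in Section 7 no contraction occurs at all. The paper works directly with $F(x)=x+q_0+\K_{k=1}^{\infty}\left(\frac{p_k}{x+q_k}\right)$, sets $X_m=\frac{2}{4-w}\cdot\frac{1}{F(m)}$, and uses a single linear sequence $r_k=uk+v$ with no even/odd split (a second linear solution $(u,v)=\left(\frac{4}{w-4},\frac{2+4x}{w-4}\right)$ also works, per the paper's Remark 7.3). Searching among parity-split quadratics is not fatal --- the linear solution lies in that space --- but it makes the bookkeeping you flag as the ``delicate point'' needlessly heavy, and the preliminary hunt for an $F$ whose even part is $CF(w;x)$ is solving a problem that does not arise.
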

\begin{lemma}
Let sequences $\left\{p_k\right\}_{k=1}^{\infty}$ and $\left\{q_k\right\}_{k=0}^{\infty}$ is given as Theorem 7.1. If $0<w<4$, then $\{X_m\}_{m=0}^{\infty}=\left\{\frac{2/(4-w)}{m+ q_0+}\k_{k=1}^{\infty}\left(\frac{p_k}{m+q_k}\right)\right\}_{m=0}^{\infty}$ satisfies the following relation
\begin{equation}
X_{m}-\frac{w(m+1)}{2(2m+1)}X_{m+1}=\frac{1}{2m+1}.\label{arcsine-DF}
\end{equation}
\end{lemma}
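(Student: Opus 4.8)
The plan is to follow the same strategy as in the proofs of Lemma 5.2 and Lemma 6.3, \emph{i.e.}, to realize $\{X_m\}$ as $X_m = \frac{2/(4-w)}{F(m)}$ for a continued fraction $F(x) = b_0 + \K_{k=1}^{\infty}(a_k/b_k)$ whose partial numerators and denominators are polynomials in $x$ and $k$, and then to exhibit a functional equation linking $F(x)$ to $F(x+1)$ via the Bauer--Muir transformation. Concretely, since the $X_m$ of the lemma is already the single-chain continued fraction $CF(w;m) = \frac{2/(4-w)}{m + q_0 +}\K_{k=1}^{\infty}(p_k/(m+q_k))$, I would set $F(x) = x + q_0 + \K_{k=1}^{\infty}(p_k/(x+q_k))$ and, using the equivalence transformation (Lemma 4.2) to clear the factors $4-w$, rewrite $F$ with the cleaner coefficients $a_k \propto w\,k(2k-1)$ (quadratic in $k$) and $b_k$ affine in both $x$ and $k$. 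Unlike the Catalan cases, no even/odd contraction is needed here: the chain is already in the desired form, so the Bauer--Muir transformation should shift $x \mapsto x+1$ in one stroke.

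The heart of the argument is the choice of the modifying sequence $\{r_m\}$. Since $a_m$ is quadratic and $b_m$ affine in $m$, I expect $r_m$ to be affine in $m$ (with coefficients depending on $x$ and $w$), in contrast to the quadratic-in-$k$ factors used for Catalan's constant. I would determine $r_m$ by forcing $\phi_m = a_m - r_{m-1}(b_m + r_m)$ to be independent of $m$: expanding $r_{m-1}(b_m + r_m)$ as a quadratic in $m$ and matching its leading and linear coefficients against those of $a_m$ yields two equations for the two unknown coefficients of $r_m$, after which $\phi_m$ collapses to a single constant (depending on $x$ and $w$), which I anticipate to be proportional to $w(x+1)(2x+1)/(4-w)^2$. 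With $\phi_m$ constant, the Bauer--Muir output of Lemma 4.3 has numerators $a_{m-1}$ and denominators $b_m + r_m - r_{m-2}$; I would then check that $b_0 + r_0 = \frac{2(2x+1)}{4-w}$ and that the shifted denominators $b_{m+1} + r_{m+1} - r_{m-1}$ reproduce, up to a further equivalence transformation, exactly the denominators of $F(x+1)$.

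Assuming these verifications, Lemma 4.3 followed by Lemma 4.2 should give
\begin{equation*}
F(x) - \frac{2(2x+1)}{4-w} = \frac{-\dfrac{2w(x+1)(2x+1)}{(4-w)^2}}{\dfrac{w(x+1)}{4-w} + F(x+1)},
\end{equation*}
which cross-multiplies to the functional equation $F(x)\bigl(F(x+1) + \tfrac{w(x+1)}{4-w}\bigr) = \tfrac{2(2x+1)}{4-w}F(x+1)$. Dividing by $F(x)F(x+1)$ and substituting $X_m = \frac{2/(4-w)}{F(m)}$ then yields exactly \eqref{arcsine-DF}, and a last application of Lemma 4.2 confirms that this $X_m$ is the continued fraction with the stated $p_k$ and $q_k$. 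The main obstacle is the bookkeeping introduced by the extra parameter $w$: guessing the correct affine $r_m$ so that both the cancellation making $\phi_m$ constant \emph{and} the telescoping $b_{m+1}+r_{m+1}-r_{m-1} \leftrightarrow F(x+1)$ hold simultaneously is more delicate than in the $w$-free Catalan computations, and it is the step where the specific algebraic identities must be pinned down exactly.
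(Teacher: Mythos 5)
Your proposal follows essentially the same route as the paper's proof of Lemma 7.2: the paper also sets $F(x)=x+q_0+\K_{k=1}^{\infty}\bigl(p_k/(x+q_k)\bigr)$, chooses the affine modifying factors $r_k=uk+v$ with $u=-\tfrac{w}{4-w}$, $v=\tfrac{wx}{4-w}$ (your coefficient-matching indeed produces this choice, and the second root gives the alternative noted in Remark 7.3), obtains the constant $\phi_k=-\tfrac{2w(x+1)(2x+1)}{(4-w)^2}$, derives exactly the functional equation $F(x)\bigl(F(x+1)+\tfrac{w(x+1)}{4-w}\bigr)=\tfrac{2(2x+1)}{4-w}F(x+1)$ via Bauer--Muir, and concludes by taking reciprocals with $X_m=\tfrac{2/(4-w)}{F(m)}$. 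The only cosmetic difference is your optional preliminary equivalence transformation to clear the $4-w$ factors, which the paper skips since the shifted denominators $b_{k+1}+r_{k+1}-r_{k-1}=x+1+q_k$ already match $F(x+1)$ directly.
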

\begin{proof} We let
\begin{equation}
F(x)=x+q_0+\K_{k=1}^{\infty}\left(\frac{p_k}{x+q_k}\right).
\end{equation}
We choose $u=-\frac{w}{4-w}$, $v=\frac{wx}{4-w}$. Let $r_k=uk+v$ and
\begin{equation}
\phi_k=p_k-r_{k-1}\left(q_k+r_k\right),\quad k\ge 1.
\end{equation}
It is easy to verify that $\phi_k$ is a constant,\emph{ i.e.} $\phi_k=-\frac{2w(x+1)(2x+1)}{(4-w)^2}$. By means of the Bauer-Muir transformation, we get
\begin{align*}
F(x)=&x+q_0+r_0+\begin{array}{ccccccc}
\phi_1& & p_1& & p_2 &\\
\cline{1-1}\cline{3-3}\cline{5-5}\cline{7-7}
x+q_1+ r_1&\!+\!&x+q_2+r_2-r_0&\!+\!&x+q_3+r_3-r_1&\!+\!\cdots\!
\end{array}\\
=&\frac{2(2x+1)}{4-w}+\begin{array}{ccccccc}
-\frac{2w(x+1)(2x+1)}{(4-w)^2}& & p_1& & p_k &\\
\cline{1-1}\cline{3-3}\cline{5-5}\cline{7-7}
\frac{2(3+2x)}{4-w}&+&x+1+q_1&\!+\!\cdots\!+\!&x+1+q_k&+\cdots
\end{array}\\
=&\frac{2(2x+1)}{4-w}+\frac{-\frac{2w(x+1)(2x+1)}{(4-w)^2}}{\frac{w(x+1)}{4-w}+F(x+1)}.
\end{align*}
It is not difficult to check that
\begin{equation}
\left(F(x)-\frac{2(2x+1)}{4-w}\right)F(x+1)+F(x)\frac{w(x+1)}{4-w}=0,
\end{equation}
This in turn implies that
\begin{equation}
F(x)\left(F(x+1)+\frac{w(x+1)}{4-w}\right)=\frac{2(2x+1)}{4-w}F(x+1).
\end{equation}
By taking reciprocals, one has
\begin{equation}
\frac{2}{4-w}\frac{1}{F(x)}-\frac{w(x+1)}{2(2x+1)}\frac{2}{4-w}\frac{1}{F(x+1)}=\frac{1}{2x+1}.
\end{equation}
Set $X_m=\frac{2}{4-w}\frac{1}{F(m)}$, Lemma 7.2 follows from the previous equality readily.\end{proof}
\begin{remark}
We may use another solution $(u,v)=\left(\frac{4}{w-4},\frac{2+4x}{w-4}\right)$ to prove the foregoing lemma. In this case, we have $
\phi_k=-\frac{2wx(2x-1)}{(4-w)^2}$.
\end{remark}
\noindent{\emph{Proof of Theorem 7.1.} Firstly, multiplying~\eqref{arcsine-DF} by $\frac{w^m (m!)^2}{(2m)!}$, and by next applying telescoping method to this equality, we complete the proof of Theorem 7.1.}\qed

\begin{remark}
Let $\kappa_1=\frac {2}{4-w}, \lambda_1=x$. For $k\ge 1$, we define $\kappa_{2k+1}=\frac{wk}{4-w}, \kappa_{2k}=\frac{2(2k-1)}{4-w}$, $\lambda_{2k+1}=x+1, \lambda_{2k}=1$. If $0<w<4$, $\frac{2/(4-w)}{x+ q_0+}\k_{k=1}^{\infty}\left(\frac{p_k}{x+q_k}\right)=\k_{k=1}^{\infty}\left(\frac{\kappa_k}{\lambda_k}\right)$ for $\Re x\ge 0$.
\end{remark}
\section{A special Lerch transcendent }
The Lerch transcendent~\cite{Lerch} is given by
\begin{equation}
\Phi (z,s,\alpha )=\sum _{m=0}^{\infty }\frac {z^{m}}{(m+\alpha )^{s}}.
\end{equation}
In this section, we shall give a continued fraction expression for $\Phi (z,1,\alpha )$, which usually appears in BBP-type formulas for  various mathematical constants. Stieltjes~\cite{Stieltjes} studied even its continued fraction expansion, also see Wall~\cite[p.~359-360]{Wall}.


\begin{theorem}
Let $\Re x\ge 0$, $\alpha>0$, $z\neq 1$, and continued fraction $CF(z,\alpha;x)$ be defined by
\begin{equation}
CF(z,\alpha;x):=\frac{1/(1-z)}{x+ q_0+}\K_{k=1}^{\infty}\left(\frac{p_k}{x+q_k}\right),
\end{equation}
where for $k\in\mathbb{N}_0$
\begin{align}
p_k=-\frac{z~k^2}{(1-z)^2},\quad q_k=\alpha+\frac{(1+z)k+z}{1-z}.
\end{align}
Then for all $n\in \mathbb{N}_0$, and $ |z|\le 1$ with $z\ne 1$, we have
\begin{equation}
\Phi (z,1,\alpha )=\sum _{m=0}^{n-1 }\frac {z^{m}}{m+\alpha }+z^{n}\cdot CF(z,\alpha;n).
\end{equation}
\end{theorem}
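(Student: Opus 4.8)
The plan is to mirror the template of Sections~5--7: extract the first-order linear difference equation hidden in the problem, prove it by a single Bauer--Muir transformation, and then recover the theorem by telescoping a geometrically weighted sum. First I would establish the auxiliary lemma that the sequence $\{X_m\}_{m=0}^{\infty}=\{CF(z,\alpha;m)\}_{m=0}^{\infty}$ satisfies
\[
X_m-z\,X_{m+1}=\frac{1}{m+\alpha}.
\]
This is exactly what (Step~1) of the MC algorithm predicts: writing $t_m=\frac{1}{m+\alpha}z^m$ with $R(m)=\frac{1}{m+\alpha}$, the ratio $\frac{t_{m+1}/R_{m+1}}{t_m/R_m}$ collapses to the constant $z$, so the governing equation carries the constant coefficient $a(m)\equiv z$ and right-hand side $R(m)=\frac{1}{m+\alpha}$.

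For the lemma I would set $F(x)=x+q_0+\K_{k=1}^{\infty}\left(\frac{p_k}{x+q_k}\right)$, so that $CF(z,\alpha;x)=\frac{1/(1-z)}{F(x)}$, and apply Lemma~4.3 with a \emph{linear} modifying factor $r_k=uk+v$, exactly as in the arcsine case (Lemma~7.2). Substituting into $\phi_k=p_k-r_{k-1}(x+q_k+r_k)$ and forcing the coefficients of $k^2$ and $k$ to vanish yields a quadratic in $u$ whose discriminant collapses, since $(1+z)^2-4z=(1-z)^2$, to the perfect square $1$; this gives $u=-\frac{z}{1-z}$ and then $v=\frac{z(x+\alpha-1)}{1-z}$. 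With this choice one verifies that $\phi_k\equiv-\frac{z(x+\alpha)^2}{(1-z)^2}$ is constant in $k$, that $b_0+r_0=\frac{x+\alpha}{1-z}$, and, crucially, that $b_k+r_k-r_{k-2}=(x+1)+q_{k-1}$ for $k\ge 2$ while the shifted numerators revert to $p_{k-1}$. These last identities say that the tail of the transformed continued fraction is \emph{literally} the tail of $F(x+1)$.

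Feeding these into Lemma~4.3 (where $\phi_{k+1}/\phi_k=1$ because $\phi_k$ is constant) collapses the transform to
\[
F(x)=\frac{x+\alpha}{1-z}+\frac{-z(x+\alpha)^2/(1-z)^2}{F(x+1)+z(x+\alpha)/(1-z)}.
\]
Clearing denominators, the two copies of $\frac{z(x+\alpha)^2}{(1-z)^2}$ cancel, leaving $F(x)F(x+1)+\frac{z(x+\alpha)}{1-z}F(x)-\frac{x+\alpha}{1-z}F(x+1)=0$. Dividing by $F(x)F(x+1)$, then by $x+\alpha$, and substituting $X_m=\frac{1/(1-z)}{F(m)}$ produces the difference equation above, exactly as in the closing lines of the proofs of Lemmas~5.2 and~7.2. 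The theorem then follows by telescoping: multiplying by $z^m$ gives $z^mX_m-z^{m+1}X_{m+1}=\frac{z^m}{m+\alpha}$, and summing from $m=n$ to $\infty$ leaves $z^nX_n-\lim_{M\to\infty}z^{M+1}X_{M+1}=\sum_{m=n}^{\infty}\frac{z^m}{m+\alpha}$; since the continued-fraction part of $F(M)$ stays bounded as $M\to\infty$, one has $F(M)\sim M$, hence $X_M=O(1/M)\to 0$, and as $|z^{M+1}|\le 1$ the boundary term vanishes, giving the claimed identity after rearrangement.

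The hard part will not be the Bauer--Muir algebra---that is mechanical once the ansatz $r_k=uk+v$ and the root $u=-z/(1-z)$ are in hand---but the analytic justification on the \emph{boundary} $|z|=1$, $z\ne 1$. For $|z|<1$ every series and continued fraction converges absolutely and all the manipulations are unconditionally valid. On the unit circle, however, $\sum z^m/(m+\alpha)$ converges only conditionally (Dirichlet's test), and one must separately argue that $CF(z,\alpha;x)$ converges there and that $z^{M+1}X_{M+1}\to 0$ genuinely holds. I would therefore prove the identity first on $|z|<1$, where convergence and the growth bound $F(M)\sim M$ are automatic, and then extend it to $|z|=1$, $z\ne 1$ by a continuity/analytic-continuation argument in $z$ for each fixed $n$, using a uniform tail bound of the type $X_M=O(1/M)$ on compact subsets of $\{|z|\le 1\}\setminus\{1\}$ to control the remainder.
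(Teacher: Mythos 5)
Your proposal is correct and takes essentially the same route as the paper: a Bauer--Muir transformation with linear modifying factors $r_k=uk+v$ establishes the difference equation $X_m-zX_{m+1}=\frac{1}{m+\alpha}$ (the paper's Lemma 8.2), and the theorem then follows by multiplying by $z^m$ and telescoping. The only deviation is your choice of root $u=-\frac{z}{1-z}$, $v=\frac{z(x+\alpha-1)}{1-z}$, which gives $\phi_k\equiv-\frac{z(x+\alpha)^2}{(1-z)^2}$ and a transform relating $F(x)$ directly to $F(x+1)$, whereas the paper takes the other root $u=-\frac{1}{1-z}$, $v=-\frac{x+\alpha}{1-z}$, obtains $\phi_k\equiv-\frac{z(x+\alpha-1)^2}{(1-z)^2}$ and a relation to $F(x-1)$, and then substitutes $x\mapsto x+1$ --- a harmless variant of exactly the kind the paper itself records in Remark 7.3; your additional attention to convergence on the boundary $|z|=1$ is more careful than the paper, whose proof of Theorem 8.1 telescopes without comment.
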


\begin{lemma}
Let sequences $\left\{p_k\right\}_{k=1}^{\infty}$ and $\left\{q_k\right\}_{k=0}^{\infty}$ be defined as Theorem 8.1, and
\begin{equation}
X_m=\frac{1/(1-z)}{m+ q_0+}\K_{k=1}^{\infty}\left(\frac{p_k}{m+q_k}\right).
\end{equation}
If $ |z|\le 1$ and $z\ne 1$, the sequence $\left\{X_m\right\}_{m=0}^{\infty}$ satisfies the following relation
\begin{equation}
X_{m}-z X_{m+1}=\frac{1}{m+\alpha}.\label{Lerch transcendent-DF}
\end{equation}
\end{lemma}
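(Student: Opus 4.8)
The plan is to follow the Bauer--Muir template already used for Lemmas 5.2, 6.1 and 7.2, with the arcsine case (Lemma 7.2) as the closest analogue, since here too the continued fraction is ``clean'': the partial numerators $p_k$ and partial denominators $x+q_k$ are both independent of $x$. First I would set
\[
F(x)=x+q_0+\K_{k=1}^{\infty}\left(\frac{p_k}{x+q_k}\right),
\]
so that $X_m=\dfrac{1/(1-z)}{F(m)}$, and aim to prove a functional equation relating $F(x)$ to $F(x+1)$ which, after taking reciprocals, becomes \eqref{Lerch transcendent-DF}.

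The engine is Lemma 4.3 with a linear modifying sequence $r_k=uk+v$. Writing $\phi_k=p_k-r_{k-1}(x+q_k+r_k)$ and expanding as a polynomial in $k$, I would choose $u,v$ so that $\phi_k$ is constant in $k$. Since $p_k=-\frac{z}{(1-z)^2}k^2$ and $q_k$ is linear in $k$ with slope $\frac{1+z}{1-z}$, the vanishing of the $k^2$-coefficient forces $(1-z)^2u^2+(1-z^2)u+z=0$, whose roots are $u=-\frac{z}{1-z}$ and $u=-\frac{1}{1-z}$. The decisive structural point is that the first root also satisfies $\frac{1+z}{1-z}+2u=1$, which is exactly the condition making the transformed partial denominators $x+q_{k+1}+r_{k+1}-r_{k-1}$ collapse to $(x+1)+q_k$; I would therefore take $u=-\frac{z}{1-z}$. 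The vanishing of the $k^1$-coefficient then fixes $v=\frac{z(x+\alpha-1)}{1-z}$, and a short computation yields the constant $\phi_k\equiv-\frac{z(x+\alpha)^2}{(1-z)^2}$, together with $x+q_0+r_0=\frac{x+\alpha}{1-z}$ and $x+q_1+r_1=\frac{x+\alpha+1}{1-z}$.

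With $\phi_k$ constant every factor $\phi_{k+1}/\phi_k$ in Lemma 4.3 equals $1$, so the transformed continued fraction keeps numerators $p_k$ and has denominators $(x+1)+q_k$; its tail is thus $F(x+1)$ up to the leading partial denominator, and comparing $x+q_1+r_1$ with $(x+1)+q_0$ gives the tail value $F(x+1)+\frac{z(x+\alpha)}{1-z}$. This produces
\[
F(x)=\frac{x+\alpha}{1-z}+\cfrac{-\frac{z(x+\alpha)^2}{(1-z)^2}}{F(x+1)+\frac{z(x+\alpha)}{1-z}}.
\]
Clearing denominators, the two copies of $-\frac{z(x+\alpha)^2}{(1-z)^2}$ cancel, leaving $F(x)F(x+1)+\frac{z(x+\alpha)}{1-z}F(x)-\frac{x+\alpha}{1-z}F(x+1)=0$; dividing by $F(x)F(x+1)$, rearranging, and dividing by $x+\alpha$ gives $\frac{1}{1-z}\frac{1}{F(x)}-\frac{z}{1-z}\frac{1}{F(x+1)}=\frac{1}{x+\alpha}$, which is precisely $X_x-zX_{x+1}=\frac{1}{x+\alpha}$ once $X_m=\frac{1/(1-z)}{F(m)}$.

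The algebra aside, the hard part is analytic rather than formal. Lemma 4.3 requires $\phi_k\neq0$; since $\phi_k=-\frac{z(x+\alpha)^2}{(1-z)^2}$, this holds whenever $z\neq0$ and $x+\alpha\neq0$, the latter automatic from $\alpha>0$ and $\Re x\ge0$. The degenerate case $z=0$ must be handled separately, but there $p_k\equiv0$, so $F(x)=x+\alpha$ and the identity reduces to $\frac{1}{m+\alpha}=\frac{1}{m+\alpha}$. The remaining subtlety is to guarantee that $F(m)$, and hence $X_m$, genuinely converges for every $|z|\le1$ with $z\ne1$, and that the Bauer--Muir transform and its tail converge to the same value, so that the formal functional equation is an identity between convergent continued fractions; I expect this convergence verification, not the algebraic manipulation, to be the main obstacle.
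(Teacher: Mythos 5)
Your proposal is correct and is essentially the paper's own proof: the same $F(x)$, the same key Lemma 4.3 with linear modifying factors $r_k=uk+v$ chosen to make $\phi_k$ constant in $k$, the same resulting functional equation $F(x)\bigl(F(x+1)+\tfrac{z(x+\alpha)}{1-z}\bigr)=\tfrac{x+\alpha}{1-z}F(x+1)$, and the same reciprocal step with $X_m=\tfrac{1/(1-z)}{F(m)}$. The only deviation is that you take the root $u=-\tfrac{z}{1-z}$, which makes the transformed denominators collapse to $(x+1)+q_k$ and relates $F(x)$ to $F(x+1)$ directly, whereas the paper takes the other root $u=-\tfrac{1}{1-z}$ with $v=-\tfrac{x+\alpha}{1-z}$ (so $\phi_k=-\tfrac{z(x+\alpha-1)^2}{(1-z)^2}$ and the transformed denominators are $b_k-1$), obtains a relation between $F(x)$ and $F(x-1)$, and then replaces $x$ by $x+1$ --- an interchangeable variant (compare Remark 7.3 for the arcsine case), and the paper is no more careful than you are about the convergence caveats you flag at the end.
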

\begin{proof} We let $a_k=p_k$, $b_k=x+q_k$, and
\begin{equation}
F(x)=b_0+\K_{k=1}^{\infty}\left(\frac{a_k}{b_k}\right).
\end{equation}

We now choose $u=-\frac{1}{1-z}$, $v=-\frac{x+\alpha}{1-z}$. Let $r_k=uk+v$ and
\begin{equation}
\phi_k=a_k-r_{k-1}\left(b_k+r_k\right),\quad k\ge 1.
\end{equation}
Note that $\phi_k=-\frac{z(x+\alpha-1)^2}{(1-z)^2}$ and $b_{k+1}+r_{k+1}-r_{k-1}=b_k-1$. It follows from Lemma 4.3 that
\begin{align*}
F(x)=&b_0+r_0+\begin{array}{ccccccc}
\phi_1& & a_1& & a_k &\\
\cline{1-1}\cline{3-3}\cline{5-5}\cline{7-7}
b_1+ r_1&\!+\!&b_2+r_2-r_0&\!+\!\cdots\!+\!&b_{k+1}+r_{k+1}-r_{k-1}&\!+\!\cdots\!
\end{array}\\
=&\frac{(1 - x-\alpha)z}{1-z}+\begin{array}{ccccccc}
-\frac{z(x+\alpha-1)^2}{(1-z)^2}& & a_1& & a_k &\\
\cline{1-1}\cline{3-3}\cline{5-5}\cline{7-7}
\frac{(2 - x-\alpha)z}{1-z}&\!+\!&b_1-1&\!+\!\cdots\!+\!&b_k-1&+\cdots
\end{array}\\
=&\frac{(1 - x-\alpha)z}{1-z}+\frac{-\frac{z(x+\alpha-1)^2}{(1-z)^2}}{F(x-1)-\frac{x+\alpha-1}{1-z}}.
\end{align*}

Replacing $x$ by $x+1$, we find that
\begin{equation}
F(x)\left(F(x+1)+\frac{(x+\alpha)z}{1-z}\right)=F(x+1)\frac{x+\alpha}{1-z}.
\end{equation}
Dividing by $F(x)F(x+1)$ on both sides of the equation, one has
\begin{equation*}
\frac{x+\alpha}{1-z}\frac{1}{F(x)}=1+\frac{(x+\alpha)z}{1-z}\frac{1}{F(x+1)},
~\emph{i.e.}~
\frac{1}{1-z}\frac{1}{F(x)}-z\frac{1}{1-z}\frac{1}{F(x+1)}=\frac {1}{x+\alpha}.
\end{equation*}
Finally by taking $X_m=\frac{1}{1-z}\frac{1}{F(m)}$, we shall complete the proof of Lemma 8.2 at once.\end{proof}

\noindent{\emph{Proof of Theorem 8.1.} Now multiplying both sides in~\eqref{Lerch transcendent-DF} by $z^m $, we deduce that}
\begin{equation}
z^m X_{m}-z^{m+1} X_{m+1}=\frac{z^m}{m+\alpha},
\end{equation}
Theorem 8.1 follows from telescoping the foregoing equality readily.\qed

\begin{remark}
Let $\kappa_1=\frac{1}{1-z}, \lambda_1=x+\alpha,$ and for $k\ge 1$ $\kappa_{2k+1}=\frac{k}{1-z}, \kappa _{2k}=\frac{kz}{1-z}$, $\lambda_{2k+1}=x+\alpha, \lambda_{2k}=1$. If $|z|\le 1$ with $z\neq 1$, $\frac{1/(1-z)}{x+ q_0+}\k_{k=1}^{\infty}\left(\frac{p_k}{x+q_k}\right)=\k_{k=1}^{\infty}\left(\frac{\kappa_k}{\lambda_k}\right)$ for $\Re x\ge 0$.
\end{remark}

\section{A companion of Landau's constants}
With the help of \emph{Mathematica} software, we have
\begin{equation}
L=\sum_{m=0}^{\infty}\frac{(-1)^m(2m)!^2}{16^m(m!)^4}=\frac{\Gamma(1/4)}{2 \sqrt{\pi}~\Gamma(3/4)},
\end{equation}
which is closely connected with the well-known Landau's constants $G(n)=\sum_{m=0}^{n}\frac{(2m)!^2}{16^m(m!)^4}$. For further details of the history of Landau's constants up to about 2015, interested reader is referred to~\cite{Cao1}. Indeed,
the relation between Landau's constants and the companion of Landau's constants is very similar to that between the harmonic numbers $\sum_{m=1}^{n}\frac 1m$ and the series $\sum_{m=1}^{\infty} \frac{(-1)^{m-1}}{m}=\ln 2$.
\begin{theorem}
Let $\Re x\ge 0$ and continued fraction $CF(x)$ be defined by
\begin{equation}
CF(x):=\frac 12+\frac{1/4}{x+ 1/4+}\K_{k=1}^{\infty}\left(\frac{p_k}{x+1/4}\right),
\end{equation}
where for $k\ge 1$
\begin{align}
p_{2k}=\frac{k(2k+1)}{2},\quad p_{2k-1}=\frac{(4k-1)^2}{16}.
\end{align}
Then for all $n\in \mathbb{N}_0$, we have
\begin{equation}
L=\frac{\Gamma(1/4)}{2 \sqrt{\pi}~\Gamma(3/4)}=\sum_{m=0}^{n-1}\frac{(2m)!^2}{(-16)^m(m!)^4}+\frac{(2n)!^2}{(-16)^n(n!)^4}\cdot CF(n).
\end{equation}
\end{theorem}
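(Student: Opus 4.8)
The plan is to follow the same two-tier strategy used in Sections 5--8: first isolate a first-order difference equation satisfied by the sequence $\{X_m\}=\{CF(m)\}$, prove it by a Bauer--Muir transformation, and then pass to the stated summation formula by telescoping against the factorial factor. Writing $t_m=\frac{(2m)!^2}{(-16)^m(m!)^4}$, the ratio of consecutive terms $t_{m+1}/t_m=-\frac{(2m+1)^2}{4(m+1)^2}$ dictates (Step~1--2 of the MC algorithm) that the relevant difference equation is
\[
X_m+\frac{(2m+1)^2}{4(m+1)^2}\,X_{m+1}=1,\qquad X_m:=CF(m).
\]
A quick consistency check confirms the shape: the cases $n=0$ and $n=1$ of the theorem force $CF(0)=L$ and $CF(1)=4(1-L)$, and indeed $CF(0)+\tfrac14 CF(1)=1$. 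So I would state this identity as the key lemma (the analogue of Lemmas 5.2, 6.2, 7.2 and 8.2) and spend the bulk of the work proving it.

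To prove the lemma I would split off the leading constant, writing $CF(x)=\frac12+\frac{1/4}{F(x)}$ with $F(x)=x+\frac14+\K_{k=1}^{\infty}\big(p_k/(x+\tfrac14)\big)$, and reduce the target difference equation (via $CF(x)+\tfrac{(2x+1)^2}{4(x+1)^2}CF(x+1)=1$) to the functional equation
\[
\frac{1}{F(x)}+\frac{(2x+1)^2}{4(x+1)^2F(x+1)}=\frac{4x+3}{2(x+1)^2}.
\]
Since the partial numerators $p_k$ split by the parity of $k$ exactly as the $a_k$ did in the Catalan cases, I would look for parity-dependent modifying factors $r_{2k},r_{2k-1}$ that are quadratic polynomials in $k$ (mirroring Lemmas 5.2 and 6.2), chosen so that $\phi_{2k}=a_{2k}-r_{2k-1}(b_{2k}+r_{2k})$ and $\phi_{2k-1}=a_{2k-1}-r_{2k-2}(b_{2k-1}+r_{2k-1})$ collapse to a single constant independent of $k$. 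Applying Lemma 4.3, the transformed partial denominators $b_{k+1}+r_{k+1}-r_{k-1}$ should telescope, after which an equivalence transformation (Lemma 4.2) recasts the Bauer--Muir transform of $F(x)$ as a shifted copy $F(x+1)$; dividing by $F(x)F(x+1)$ and substituting $CF(x)=\frac12+\frac{1/4}{F(x)}$ back in returns the difference equation.

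With the lemma in hand, Theorem 9.1 follows by telescoping. Multiplying the difference equation by $\sigma_m:=t_m=\frac{(2m)!^2}{(-16)^m(m!)^4}$ and using $\sigma_{m+1}=-\frac{(2m+1)^2}{4(m+1)^2}\sigma_m$ turns it into the exact difference
\[
\sigma_m X_m-\sigma_{m+1}X_{m+1}=\sigma_m,
\]
so summing from $m=n$ to $\infty$ gives $\sigma_n X_n=\sum_{m=n}^{\infty}t_m=L-\sum_{m=0}^{n-1}t_m$, which is the claim. Here I must justify $\lim_{m\to\infty}\sigma_m X_m=0$: the continued fraction $CF(m)$ has all elements positive (as $p_k>0$ and $x+\frac14>0$ for $\Re x\ge0$), so it converges by Lemma 4.4 and tends to $\frac12$, while $|\sigma_m|\sim\frac{1}{\pi m}\to0$ by Stirling; the product therefore vanishes. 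The evaluation $L=\frac{\Gamma(1/4)}{2\sqrt\pi\,\Gamma(3/4)}$ is the independently known closed form of the series and requires no input from the continued-fraction machinery.

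The main obstacle is the middle step: guessing and then rigorously verifying the quadratic modifying factors $r_{2k},r_{2k-1}$. Unlike Sections 5 and 6, here the partial denominators $x+\frac14$ are constant in $k$, so the telescoping of $b_{k+1}+r_{k+1}-r_{k-1}$ behaves differently and an extra equivalence transformation is very likely needed to restore the self-similar $F(x)\to F(x+1)$ form. Confirming that the single constant value of $\phi_k$ and the rebalanced denominators conspire to produce exactly the functional equation above is where the genuine computational labor lies, and is precisely the part that the authors would discharge with symbolic software.
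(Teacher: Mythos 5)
Your architecture coincides with the paper's: the key lemma you state is exactly the paper's Lemma 9.2 (the difference equation $X_m+\frac{(2m+1)^2}{4(m+1)^2}X_{m+1}=1$ for $X_m=CF(m)$), your functional equation for $F(x)=x+\frac14+\K_{k=1}^{\infty}\left(\frac{p_k}{x+1/4}\right)$ is the paper's identity multiplied through by $4$, and the telescoping against $\sigma_m=\frac{(2m)!^2}{(-16)^m(m!)^4}$ is verbatim the paper's proof of the theorem (your justification of $\lim_{m\to\infty}\sigma_m X_m=0$ is actually more explicit than the paper's).

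The gap is the step you defer, and the ansatz you announce for it would fail. Since $b_k=x+\frac14$ is constant in $k$ while $p_{2k}=\frac{k(2k+1)}{2}$ and $p_{2k-1}=\frac{(4k-1)^2}{16}$ are quadratic in $k$, constancy of $\phi_{2k}=p_{2k}-r_{2k-1}\left(b_{2k}+r_{2k}\right)$ and $\phi_{2k-1}=p_{2k-1}-r_{2k-2}\left(b_{2k-1}+r_{2k-1}\right)$ forces the degrees of the two modifying sequences to sum to two; taking both quadratic, as you propose ``mirroring Lemmas 5.2 and 6.2,'' makes the products quartic in $k$ and admits no solution. (Quadratic factors are correct in Sections 5--6 precisely because there the partial denominators $b_{2k}$ grow with $k$.) The working choice here is linear--linear,
\[
r_{2k}=k+\frac{5-8x^2}{4(4x+3)},\qquad r_{2k-1}=k-\frac{(2x+1)^2}{2(4x+3)},
\]
which yields $\phi_{2k}=\phi_{2k-1}=\frac{(2x+1)^2(x+1)^2}{(4x+3)^2}$ and $b_0+r_0=\frac{2(x+1)^2}{4x+3}$. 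Your expectation that an extra equivalence transformation (Lemma 4.2) is needed is also backwards: because the denominators are constant and $r_{k+1}-r_{k-1}=1$, every transformed denominator equals $b_{k+1}+1=(x+1)+\frac14$, so the Bauer--Muir transform is already the self-similar form
\[
F(x)=\frac{2(x+1)^2}{4x+3}+\frac{(2x+1)^2(x+1)^2/(4x+3)^2}{F(x+1)-\frac{(2x+1)^2}{2(4x+3)}},
\]
with no Lemma 4.2 step at all; constant denominators make this case simpler, not harder (Lemma 4.2 is needed only in the Catalan cases, where the denominators alternate in their $x$-dependence). Dividing the resulting identity by $F(x)F(x+1)$ gives precisely your functional equation, and from there the rest of your outline is the paper's proof.
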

\begin{lemma}
Let sequence $\left\{p_k\right\}_{k=1}^{\infty}$ be defined as Theorem 9.1, and
\begin{equation}
X_m=\frac 12+\frac{1/4}{m+ 1/4+}\K_{k=1}^{\infty}\left(\frac{p_k}{m+1/4}\right).
\end{equation}
Then the sequence $\left\{X_m\right\}_{m=0}^{\infty}$ is a special solution of the following equation
\begin{equation}
X_{m}+\frac{(2m+1)^2}{4(m+1)^2} X_{m+1}=1.\label{Landau's companion-DF}
\end{equation}
\end{lemma}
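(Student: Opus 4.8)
The plan is to follow the Bauer--Muir strategy of Lemmas 5.2--8.2, adapted to the two genuinely new features here: the difference equation carries a \emph{plus} sign, and the asserted solution carries an additive constant $\frac{1}{2}$. First I would set $F(x) = b_0 + \K_{k=1}^{\infty}(a_k/b_k)$ with $b_0 = b_k = x + 1/4$ and $a_k = p_k$, so that the claimed solution reads $X_m = \frac{1}{2} + \frac{1/4}{F(m)}$. Note that every $a_k = p_k$ and every $b_k$ is positive for $\Re x \ge 0$; unlike the Catalan cases, the alternating sign of the Landau series is therefore invisible in the elements of $F$ and must re-emerge from the sign of the coefficients produced by the transformation.

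Second, I would search for a sequence $\{r_k\}$, using separate quadratic-in-$k$ forms for $r_{2k}$ and $r_{2k-1}$ as in Lemma 6.1, chosen so that
$$\phi_{2k} = a_{2k} - r_{2k-1}(b_{2k}+r_{2k}), \qquad \phi_{2k-1} = a_{2k-1} - r_{2k-2}(b_{2k-1}+r_{2k-1})$$
are equal to one another and independent of $k$. Pinning down these $r_k$ --- by expanding the two expressions as polynomials in $k$ and forcing the non-constant coefficients to vanish --- is the main obstacle; everything after it is bookkeeping. With $\{r_k\}$ in hand I would invoke Lemma 4.3 to pass to the Bauer--Muir transform, and then apply Lemma 4.2 (an equivalence transformation) to absorb the parity-dependent factors produced by the even/odd split, exactly as in the Catalan computation, so that the tail of the transformed fraction becomes a rescaled copy of $F(x+1)$.

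The identity I expect to reach has the familiar Bauer--Muir shape $F(x) = A(x) + B(x)/(C(x)+F(x+1))$ with $B = -AC$, so that the term proportional to $1/(F(x)F(x+1))$ drops out. I anticipate the transformation to yield
$$A(x) = \frac{2(x+1)^2}{4x+3}, \qquad C(x) = -\frac{(2x+1)^2}{2(4x+3)},$$
these being the unique values consistent with the target difference equation, i.e.
$$F(x)\left(F(x+1) - \frac{(2x+1)^2}{2(4x+3)}\right) = \frac{2(x+1)^2}{4x+3}\, F(x+1).$$
The negative value of $C(x)$ is exactly what will convert the eventual difference equation from a minus into a plus, and is the route by which the alternating sign re-enters.

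Finally, dividing the last identity by $F(x)F(x+1)$ gives $1 + C(x)/F(x+1) = A(x)/F(x)$. Substituting the reciprocal relation $1/F(m) = 4X_m - 2$ --- which is just $X_m = \frac{1}{2} + \frac{1/4}{F(m)}$ solved for $1/F(m)$ --- and simplifying, the key arithmetic being $4(x+1)^2 - (2x+1)^2 = 4x+3$, collapses the constant term to exactly $1$ and reproduces $X_m + \frac{(2m+1)^2}{4(m+1)^2}X_{m+1} = 1$. Thus the additive $\frac{1}{2}$ is not cosmetic: it is precisely what makes the affine substitution for $1/F(m)$ yield a constant right-hand side. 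I expect the only delicate points to be the determination of $\{r_k\}$ and the sign/parity bookkeeping in the equivalence transformation, with the closing verification being routine.
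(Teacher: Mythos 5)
Your proposal follows the paper's proof essentially step for step: the same $F(x)=b_0+\K_{k=1}^{\infty}\left(p_k/b_k\right)$ with $b_k=x+\tfrac14$, the same search for parity-split modifying factors $r_k$ making $\phi_{2k}=\phi_{2k-1}$ constant in $k$ (the paper finds $\phi_k=\frac{(2x+1)^2(x+1)^2}{(4x+3)^2}$), the identical functional equation $F(x)\left(F(x+1)-\frac{(2x+1)^2}{2(4x+3)}\right)=\frac{2(x+1)^2}{4x+3}F(x+1)$, and the same closing substitution $X_m=\tfrac12+\frac{1/4}{F(m)}$ using $(2x+1)^2+(4x+3)=4(x+1)^2$. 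Two harmless deviations from the paper: the correct modifying factors turn out to be linear rather than quadratic in $k$, namely $r_{2k}=k+\frac{5-8x^2}{4(4x+3)}$ and $r_{2k-1}=k-\frac{(2x+1)^2}{2(4x+3)}$ (your quadratic ansatz would simply return zero leading coefficients), and since every partial denominator equals $x+\tfrac14$ the transformed tail is already $F(x+1)$ on the nose, so the Lemma 4.2 equivalence step you anticipate is not needed here.
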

\begin{proof} For $k\ge 0$ let $b_k=x+1/4$, and define
\begin{equation}
F(x)=b_0+\K_{k=1}^{\infty}\left(\frac{p_k}{b_k}\right).
\end{equation}
Set
\begin{equation}
r_{2k}=k+\frac{5-8x^2}{4(4x+3)},\quad r_{2k-1}=k-\frac{(2x+1)^2}{2(4x+3)},
\end{equation}
and
\begin{equation*}
~~\phi_{2k}=p_{2k}-r_{2k-1}\left(b_{2k}+r_{2k}\right),~~\phi_{2k-1}=p_{2k-1}-r_{2k-2}\left(b_{2k-1}+r_{2k-1}\right),~~
\end{equation*}
thus $\phi_{2k}=\phi_{2k-1}=\frac {(2x+1)^2(x+1)^2}{(4x+3)^2}$. It follows from Lemma 4.3 that
\begin{align*}
F(x)=&b_0+r_0+\begin{array}{ccccccc}
\phi_1& & p_1& & p_2 &\\
\cline{1-1}\cline{3-3}\cline{5-5}\cline{7-7}
b_1+ r_1&+&b_2+r_2-r_0&+&b_3+r_3-r_1&+\cdots
\end{array}\\
=&x+\frac 14+\frac{5-8x^2}{4(4x+3)}+
\begin{array}{ccccccc}
\frac {(2x+1)^2(x+1)^2}{(4x+3)^2}& & p_1& & p_2 &\\
\cline{1-1}\cline{3-3}\cline{5-5}\cline{7-7}
b_1+1-\frac{(2x+1)^2}{2(4x+3)}&+&b_2+1&+&b_3+1&+\cdots
\end{array}\\
=&\frac{2(x+1)^2}{4x+3}+
\begin{array}{ccccccc}
\frac {(2x+1)^2(x+1)^2}{(4x+3)^2}& & p_1& & p_2 &\\
\cline{1-1}\cline{3-3}\cline{5-5}\cline{7-7}
-\frac{(2x+1)^2}{2(4x+3)}+b_0+1&+&b_1+1&+&b_2+1&+\cdots
\end{array}\\
=&\frac{2(x+1)^2}{4x+3}+\frac{\frac {(2x+1)^2(x+1)^2}{(4x+3)^2}}{-\frac{(2x+1)^2}{2(4x+3)}+F(x+1)}.
\end{align*}
Hence,
\begin{equation*}
F(x)\left(F(x+1)-\frac{(2x+1)^2}{2(4x+3)}\right)=F(x+1)\frac {2(x+1)^2}{4x+3}.
\end{equation*}
Dividing by $F(x)F(x+1)$ on both sides of the equation, we obtain that
\begin{equation}
\frac{1/4}{F(x)}+\frac{(2x+1)^2}{4(x+1)^2}\frac{1/4}{F(x+1)}=\frac{4x+3}{8(x+1)^2}.
\end{equation}
Finally taking $X_m=\frac 12+\frac{1/4}{F(m)}$, it is not difficult to check that $X_m$ satisfies~\eqref{Landau's companion-DF}, thus this finishes the proof of Lemma 9.2.\end{proof}

\noindent{\emph{Proof of Theorem 9.1.} Multiplying both sides in \eqref{Companion-G-DF} by $\frac{(2m)!^2}{(-16)^m(m!)^4}$ , we find that}
\begin{equation}
\frac{(2m)!^2}{(-16)^m(m!)^4}X_m-\frac{\left((2(m+1))!\right)^2}{(-16)^{m+1}\left((m+1)!\right)^4}X_{m+1}=\frac{(2m)!^2}{(-16)^m(m!)^4}.
\end{equation}
Now Theorem 9.1 follows from telescoping the foregoing equality at once.\qed

\section{The exponential function}
It is well-known that the exponential function $\exp(z)$ is given as
\begin{equation}
\exp(z)=\sum_{m=0}^{\infty}\frac{z^m }{m!},\quad z\in \mathbb{C}.
\end{equation}
The investigation of the exponential function has a long-standing and rich history. For the research history and continued fraction formulas of the exponential function, we refer the reader to~\cite{eFormula}.
\begin{theorem}
Let $z$ and $x$ be complex number, and write
\begin{equation}
CF(z;x):=1+\frac{z}{x+1-z+}\K_{k=1}^{\infty}\left(\frac{kz}{x+k+1-z}\right).
\end{equation}
Then for all $n\in \mathbb{N}_0$ and $z\in \mathbb{C}$, we have
\begin{equation}
\exp(z)=\sum_{m=0}^{n-1}\frac{z^m }{m!}+\frac{z^n }{n!}\cdot CF(z;n).\label{Exponential function-CF}
\end{equation}
\end{theorem}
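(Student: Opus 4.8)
The plan is to follow the template established in Sections~5--9. First I would show that the sequence $X_m := CF(z;m)$ is a special solution of the first order difference equation
$$
X_m - \frac{z}{m+1}X_{m+1} = 1,
$$
which is exactly equation~\eqref{Exponential function-DF-0} foreshadowed in Example~3.1, and then deduce the theorem by multiplying through by $z^m/m!$ and telescoping. As announced in the introduction, the difference equation will be obtained from the Bauer-Muir transformation (Lemma~4.3) with the simplest \emph{constant} modifying factors.

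Concretely, I would set $F(x) := (x+1-z) + \K_{k=1}^{\infty}\left(\frac{kz}{x+k+1-z}\right)$, so that $CF(z;x) = 1 + z/F(x)$, and apply Lemma~4.3 with $r_k \equiv z$. Substituting $a_k = kz$ and $b_k = x+k+1-z$ into $\phi_k = a_k - r_{k-1}(b_k + r_k)$ collapses the $k$-dependence, since $\phi_k = kz - z(x+k+1-z+z) = -z(x+1)$ is a nonzero constant whenever $z \neq 0$ and $\Re x \ge 0$. Because all the $\phi_k$ coincide, the ratios $\phi_{k+1}/\phi_k = 1$, so Lemma~4.3 leaves the partial numerators $kz$ and the tail $\K_{k=1}^{\infty}\left(kz/(x+k+2-z)\right)$ intact; this tail together with the leading $x+2$ reassembles into $F(x+1)+z$, while the outer term is $b_0+r_0 = x+1$. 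Collecting the pieces, the Bauer-Muir transform reads
$$
F(x) = (x+1) + \frac{-z(x+1)}{F(x+1)+z},
$$
and clearing the fraction gives the functional equation $F(x) = (x+1)F(x+1)/(F(x+1)+z)$.

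From here the difference equation is routine. Taking reciprocals yields $1/F(x) = 1/(x+1) + z/\big((x+1)F(x+1)\big)$, hence $z/F(x) = \frac{z}{x+1}\big(1 + z/F(x+1)\big)$, i.e. $CF(z;x) - 1 = \frac{z}{x+1}CF(z;x+1)$, which is \eqref{Exponential function-DF-0} with $X_m = CF(z;m)$. Multiplying by $z^m/m!$ produces the telescoping identity $\frac{z^m}{m!}X_m - \frac{z^{m+1}}{(m+1)!}X_{m+1} = \frac{z^m}{m!}$; summing from $m=n$ to $\infty$ and invoking $\lim_{m\to\infty}\frac{z^m}{m!}X_m = 0$ collapses the left side to $\frac{z^n}{n!}X_n$, while the right side is $\exp(z) - \sum_{m=0}^{n-1} z^m/m!$. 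Rearranging is precisely \eqref{Exponential function-CF}.

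The main obstacle I anticipate is analytic rather than algebraic, since the claim is made for all $z\in\mathbb{C}$. I would need to confirm that $CF(z;m)$ converges for every complex $z$ (this is a continued fraction of Gauss/incomplete-gamma type and is classically convergent, but the Bauer-Muir manipulation must be justified on genuinely convergent objects, not merely formally), to address the existence condition $\phi_m \neq 0$ (which fails only at $z=0$, a case handled trivially since then both sides reduce to $1$), and above all to establish the limit $\frac{z^m}{m!}X_m \to 0$. The last point reduces to showing $CF(z;m) \to 1$ as $m\to\infty$, i.e. that the correction term $z/F(m)$ vanishes; since $F(m) \sim m$ the factor $z^m/m!$ should dominate and force the product to zero, but making this estimate rigorous and uniform for complex $z$ is the delicate step.
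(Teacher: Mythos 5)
Your proposal is correct and follows essentially the same route as the paper's own proof: the Bauer-Muir transformation (Lemma~4.3) with constant modifying factors $r_k \equiv z$ applied to $F(x)=x+1-z+\K_{k=1}^{\infty}\left(\frac{kz}{x+k+1-z}\right)$, yielding $\phi_k=-(x+1)z$, the functional equation $F(x)\left(z+F(x+1)\right)=(x+1)F(x+1)$, the difference equation $X_m-\frac{z}{m+1}X_{m+1}=1$ via $CF(z;x)=1+z/F(x)$, and then telescoping after multiplication by $z^m/m!$. The analytic caveats you raise at the end (convergence for all $z\in\mathbb{C}$ and the vanishing of $\frac{z^m}{m!}X_m$) are legitimate points that the paper itself passes over in silence, so they mark your write-up as more careful rather than divergent.
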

\begin{lemma}
Let $m\in \mathbb{N}_0$, $z\in \mathbb{C}$, and write
\begin{equation}
X_m=1+\frac{z}{m+1-z+}\K_{k=1}^{\infty}\left(\frac{kz}{m+k+1-z}\right).
\end{equation}
Then the sequence $\left\{X_m\right\}_{m=0}^{\infty}$ is a special solution of the following equation
\begin{equation}
X_m-\frac{z}{m+1}X_{m+1}=1.\label{Exponential function-DF}
\end{equation}
\end{lemma}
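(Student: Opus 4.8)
The plan is to follow the same Bauer--Muir strategy that drives Lemmas 5.2--9.2, but now with a genuinely constant modifying factor, exactly as promised for the exponential function in the introduction. First I would set
\[
F(x)=(x+1-z)+\K_{k=1}^{\infty}\left(\frac{kz}{x+k+1-z}\right),
\]
so that the continued fraction defining $X_m$ is precisely $z/F(m)$ and hence $X_m=1+z/F(m)$. The case $z=0$ is trivial (then $X_m\equiv 1$ and the equation reads $1-0=1$), so I assume $z\neq 0$. Writing $a_k=kz$ and $b_k=x+k+1-z$, the whole task reduces to establishing the functional equation $F(x)=(x+1)F(x+1)/\bigl(F(x+1)+z\bigr)$, after which the claimed recurrence follows by a short reciprocal computation.

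The decisive choice is the modifying sequence. I would take the constant factors $r_k\equiv z$ and compute $\phi_k=a_k-r_{k-1}(b_k+r_k)=kz-z(x+k+1)=-z(x+1)$, which is independent of $k$ and nonzero for $x=m\ge 0$ and $z\neq 0$. Because $\phi_{k+1}/\phi_k=1$, applying the Bauer--Muir transformation (Lemma 4.3) collapses all the correction terms: one gets $b_0+r_0=x+1$, a first partial numerator $\phi_1=-z(x+1)$ over $b_1+r_1=x+2$, and for $k\ge 1$ a transformed tail with partial numerators $a_k=kz$ over denominators $b_{k+1}+r_{k+1}-r_{k-1}=x+k+2-z$.

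What makes everything close up is the single shift identity $b_k(x+1)=b_{k+1}(x)$. It shows that the transformed tail $(x+2)+\K_{k=1}^{\infty}\bigl(kz/(x+k+2-z)\bigr)$ is exactly $F(x+1)+z$, since $F(x+1)=(x+2-z)+\K_{k=1}^{\infty}\bigl(kz/(x+k+2-z)\bigr)$. Hence the Bauer--Muir output becomes $F(x)=(x+1)-z(x+1)/\bigl(F(x+1)+z\bigr)=(x+1)F(x+1)/\bigl(F(x+1)+z\bigr)$. Taking reciprocals, multiplying by $z$, and substituting $z/F(x)=X_x-1$ together with $z/F(x+1)=X_{x+1}-1$ gives $X_x-1=\frac{z}{x+1}X_{x+1}$; setting $x=m$ yields $X_m-\frac{z}{m+1}X_{m+1}=1$, which is precisely \eqref{Exponential function-DF}.

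The main obstacle I anticipate is not the algebra but the bookkeeping in recognizing the Bauer--Muir tail as a shifted copy of $F(x+1)$; this rests entirely on the shift relation above and on verifying the nonvanishing condition $\phi_k\neq 0$ so that Lemma 4.3 legitimately applies. A secondary point needing care is convergence: since $z$ ranges over all of $\mathbb{C}$, I would want to confirm that $X_m$ actually converges before transferring the functional equation from the transformed continued fraction to the sequence $\{X_m\}$, so that the Bauer--Muir identity is an equality of \emph{values} rather than merely of formal tails.
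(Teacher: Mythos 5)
Your proof is correct and follows essentially the same route as the paper: the same Bauer--Muir transformation with constant modifying factors $r_k\equiv z$, the same constant $\phi_k=-(x+1)z$, the same recognition of the transformed tail as $F(x+1)+z$ via the shift identity, and the same concluding reciprocal computation. Your explicit attention to the nonvanishing condition $\phi_k\neq 0$ and to convergence are points the paper leaves implicit, but they do not change the argument.
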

\begin{proof} We first suppose that $z\neq 0$. Otherwise, the assertion is trivial. Now let $a_k=kz$, $b_k=x+k+1-z$, and
\begin{equation}
F(x)=x+1-z+\K_{k=1}^{\infty}\left(\frac{kz}{x+k+1-z}\right).
\end{equation}
For $k\in \mathbb{N}$, set $\phi_k=a_k-r_{k-1}(b_k+r_k)$. It follows from Lemma 4.3~(the Bauer-Muir transformation) with modifying factors $\left\{r_k\right\}_{k=0}^{\infty}=\left\{z\right\}_{k=0}^{\infty}$ that
\begin{align*}
F(x)=&b_0+r_0+\begin{array}{ccccccc}
\phi_1& & a_1& & a_k &\\
\cline{1-1}\cline{3-3}\cline{5-5}\cline{7-7}
b_1+ r_1&\!+\!&b_2+r_2-r_0&\!+\!\cdots\!+\!&b_{k+1}+r_{k+1}-r_{k-1}&\!+\!\cdots\!
\end{array}\\
=&x+1+\begin{array}{ccccccc}
-(x+1)z& & a_1& & a_k &\\
\cline{1-1}\cline{3-3}\cline{5-5}\cline{7-7}
x+2&\!+\!&x+3-z&\!+\!\cdots\!+\!&x+k+2-z&\!+\!\cdots\!
\end{array}\\
=&x+1+\frac{-(x+1)z}{z+F(x+1)},
\end{align*}
here we used $\phi_k=-(x+1)z$ for $k\in\mathbb{N}$. It is easy to see that $F(x)\left(z+F(x+1)\right)=(x+1)F(x+1)$. Dividing by $F(x)F(x+1)$ on both sides of the equation, we derive that
\begin{equation}
\frac{1}{F(x)}-\frac{z}{x+1}\frac{1}{F(x+1)}=\frac{1}{x+1}.
\end{equation}
On taking $CF(z;x)=1+\frac{z}{F(x)}$, then $\{X_m\}_{m=0}^{\infty}= \{CF(z;m)\}_{m=0}^{\infty}$ satisfies equation \eqref{Exponential function-DF}, and this finishes the proof of the desired assertion.\end{proof}

\noindent{\emph{Proof of Theorem 10.1.} By multiplying the equation~\eqref{Exponential function-DF} by $\frac{z^m }{m!}$, we find that}
\begin{align}
\frac{z^m }{m!}X_m-\frac{z^{m+1}}{(m+1)!}X_{m+1}=\frac{z^m }{m!}.
\end{align}
Now, by applying telescoping the foregoing equality from $m=n$ to $\infty$, this will complete the proof of Theorem 10.1 at once.\qed
\begin{remark}
The classical formula $\exp(z)=1+\frac{z}{1-z+}\k_{k=1}^{\infty}\left(\frac{kz}{k+1-z}\right)$~(see Cuyt \emph{et al.}~\cite[p.~194, (11.1.4)]{CPV} or Jones and Thron~\cite[p.~272]{JT}) is the subcase $n=0$ in  \eqref{Exponential function-CF}. 
\end{remark}

\section{Conclusions}
From those examples in Sec.~5-10, we conclude that, for a specific Ramanujan series, the MC algorithm provides a useful tool for discovering its continued fraction representation. In other words, for a given Ramanujan series, the MC algorithm could tell us at least that there does not exist a ``simple" continued fraction expression. We shall show more continued fraction formulas elsewhere, and some conjectures are also proposed for further study. At the same time, these examples predict that it is an effective approach for us to investigate the first order difference equation by the theory of continued fractions. So our method should help advance the approximation theory, the first order linear difference equation, the theory of continued fractions and the hypergeometric function, etc. Furthermore, these continued fraction formulas could probably be used to study the irrationality and transcendence of the involved series~(see Ap\'ery~\cite{Ap}, Borwein \emph{et al.}~\cite{BPSZ}, Waldschmidt~\cite{Walds}, etc.).
\bibliographystyle{amsplain}

\end{document}